\documentclass[twoside,a4paper,reqno,11pt]{amsart} 
\usepackage{amsfonts, amsbsy, amsmath, amssymb, latexsym}
\usepackage{mathrsfs}
\usepackage[top=30mm,right=30mm,bottom=30mm,left=30mm]{geometry}
\usepackage{stmaryrd}
\usepackage{bm}

\headheight=14pt
\parskip 1mm

 \newcommand{\e}{\epsilon}
 \renewcommand{\L}{\Lambda}

\newcommand{\leqs}{\leqslant}
\newcommand{\geqs}{\geqslant}

 \newcommand{\vs}{\vspace{3mm}}

\makeatletter
\newcommand{\imod}[1]{\allowbreak\mkern4mu({\operator@font mod}\,\,#1)}
\makeatother

\newtheorem{theorem}{Theorem} 
\newtheorem*{conj*}{Conjecture}

\newtheorem{conj}[theorem]{Conjecture} 
\newtheorem{corol}[theorem]{Corollary}

\newtheorem{thm}{Theorem}[section] 

\newtheorem{prop}[thm]{Proposition} 
\newtheorem{lem}[thm]{Lemma}
 
\newtheorem{con}[thm]{Conjecture}

\theoremstyle{definition}
\newtheorem{rem}[thm]{Remark}
\newtheorem{remk}[theorem]{Remark}

\begin{document}


 \author{Timothy C. Burness}
 \address{School of Mathematics, University of Bristol, Bristol BS8 1TW, UK}
 \email{t.burness@bristol.ac.uk}
 
 \author{Elisa Covato}
 \address{School of Mathematics, University of Bristol, Bristol BS8 1TW, UK}
 \email{elisa.covato@bristol.ac.uk}
 

\title{On the prime graph of simple groups}

\begin{abstract}
Let $G$ be a finite group, let $\pi(G)$ be the set of prime divisors of $|G|$ and let 
$\Gamma(G)$ be the prime graph of $G$. This graph has vertex set $\pi(G)$, and two vertices $r$ and $s$ are adjacent if and only if $G$ contains an element of order $rs$. Many properties of these graphs have been studied in recent years, with a particular focus on the prime graphs of finite simple groups. In this note, we determine the pairs $(G,H)$, where $G$ is simple and $H$ is a proper subgroup of $G$ such that $\Gamma(G) = \Gamma(H)$. 
\end{abstract}

\subjclass[2010]{Primary 20E32; secondary 20E28}
\keywords{Finite simple groups; prime graphs; maximal subgroups}

\date{\today}
\maketitle
 
\section{Introduction}\label{s:intro}


Let $G$ be a finite group, let $\pi(G)$ be the set of prime divisors of $|G|$, and let $\Gamma(G)$ denote the \emph{prime graph} of $G$. This undirected graph, which is also known as the \emph{Gruenberg-Kegel graph} of $G$, has vertex set $\pi(G)$, and two vertices $r$ and $s$ are adjacent if and only if $G$ contains an element of order $rs$.

This notion was introduced by Gruenberg and Kegel in the 1970s, and it has been studied extensively in recent years. For example, the connectivity properties of $\Gamma(G)$ have been investigated by various authors, with a particular focus on simple groups. A characterisation of the finite groups $G$ with a disconnected prime graph was obtained by Williams \cite{Williams}, together with detailed information on the connected components when $G$ is simple. Later work of Kondrat'ev \cite{Kon} (see also Kondrat'ev and Mazurov \cite{KM}) shows that the prime graph of any finite group has at most six connected components. In fact, a more recent theorem of Zavarnitsine \cite[Theorem B]{Zav} reveals that the sporadic simple group ${\rm J}_{4}$ is the only finite group whose prime graph has six connected components. 

Various recognition problems have also been studied in the context of prime graphs and simple groups, and this continues to be an active area of research. A group $G$ is said to be \emph{prime graph recognisable} if $G \cong H$ for every finite group $H$ with $\Gamma(G) = \Gamma(H)$. For example, the Ree groups ${}^2G_2(q)$ have this property (see \cite[Theorem A]{Zav}), and detailed information on the recognisability of sporadic simple groups is given by Hagie \cite{Hagie}. More generally, one can ask if there are restrictions on the structure of a finite group $H$ with $\Gamma(G) = \Gamma(H)$ (in terms of composition factors, for example), and we refer the reader to the survey article \cite{Khosravi} for further results in this direction. 

An interesting variation on the recognisability problem is to consider the existence of subgroups $H$ of $G$ such that $\Gamma(G) = \Gamma(H)$. A recent theorem of Lucchini, Morigi and Shumyatsky (see \cite[Theorem C]{LMS}) states that every finite group $G$ has a $3$-generated subgroup $H$ such that $\Gamma(G) = \Gamma(H)$. Moreover, they construct a soluble $3$-generated group $G$ such that no $2$-generated subgroup has the same prime graph as $G$, so $3$-generation is best possible. In the same paper, the authors also investigate similar problems for other group invariants, such as $\pi(G)$ (the set of prime divisors of $|G|$), $\omega(G)$ (the set of orders of elements of $G$), $\exp(G)$ (the exponent of $G$), etc. For example, \cite[Theorem A]{LMS} implies that every finite group $G$ has a $2$-generated subgroup $H$ such that $\pi(G) = \pi(H)$, and appropriate extensions to profinite groups have recently been established by Covato \cite{Covato}. 

Note that in each of these results, $H$ is not required to be a \emph{proper} subgroup of $G$; indeed, $H=G$ may be the only subgroup with the desired property. For example, the simple group $G = {\rm L}_{5}(q)$ has no proper subgroup $H$ with $\pi(G) = \pi(H)$ (see Theorem  \ref{p:lps}). Since every finite simple group can be generated by two elements (this follows from the classification of finite simple groups), it follows that the 
results in \cite{LMS} have no content if we restrict our attention to finite simple groups. Therefore, we are led naturally to consider the following problem on prime graphs, which also relates to the aforementioned recognisability problem:

\vs

\noindent \textbf{Problem.} \emph{Let $G$ be a finite simple group. Determine the subgroups $H$ of $G$ such that $\Gamma(G) = \Gamma(H)$.}

\vs

Clearly, $\Gamma(G) = \Gamma(H)$ only if $\pi(G) = \pi(H)$. The subgroups $H$ of a simple group $G$ with $\pi(G) = \pi(H)$ have been determined by Liebeck, Praeger and Saxl (see \cite[Corollary 5]{LPS}), using the classification of finite simple groups, and this result has found a wide range of applications in permutation group theory. 
In this paper, we will use this result to solve the above problem; our main result is Corollary \ref{cor:1} below. This follows from our first theorem, which treats the case where $H$ is a maximal subgroup of $G$. (In the final column of Table \ref{tab:main}, we record the number of connected components in $\Gamma(G)$, denoted by $s(G)$, which is taken from  \cite[Tables 1-3]{KM}.) 

\begin{theorem}\label{t:main}
Let $G$ be a finite simple group and let $H$ be a maximal subgroup of $G$. Then $\Gamma(G) = \Gamma(H)$ only if one of the following holds:
\begin{itemize}\addtolength{\itemsep}{0.2\baselineskip}
\item[{\rm (a)}] $(G,H)$ is one of the cases in Table \ref{tab:main};
\item[{\rm (b)}] $G=A_n$ and $H = (S_k \times S_{n-k}) \cap G$, where $1<k<n$ and $p \leqs k$ for every prime number $p \leqs n$.
\end{itemize}
Moreover, $\Gamma(G) = \Gamma(H)$ in each of the cases in Table \ref{tab:main}.
\end{theorem}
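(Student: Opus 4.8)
The plan is to reduce the problem to a finite case analysis by combining two elementary observations with the classification already quoted in the excerpt. First, since $\Gamma(G) = \Gamma(H)$ forces $\pi(G) = \pi(H)$, every admissible pair $(G,H)$ must appear in the list of subgroups with $\pi(H)=\pi(G)$ furnished by Liebeck, Praeger and Saxl in \cite[Corollary 5]{LPS}. As passing to a subgroup can only delete prime divisors, it suffices to treat maximal $H$, and the relevant part of that list then consists of a small number of infinite families (set-stabilizers in alternating groups, together with a handful of classical and exceptional configurations) and finitely many sporadic exceptions. This reduces Theorem~\ref{t:main} to deciding, for each entry on the list, whether the whole prime graph — and not merely its vertex set — is preserved.

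Second, the inclusion $\Gamma(H) \subseteq \Gamma(G)$ is automatic: any element of $H$ of order $rs$ is such an element of $G$. Hence, once $\pi(H)=\pi(G)$, the equality $\Gamma(G)=\Gamma(H)$ holds precisely when every edge of $\Gamma(G)$ is already realised in $H$; that is, for each pair of distinct primes $r,s$ with $G$ containing an element of order $rs$, one must exhibit such an element inside $H$. The verification thus becomes an edge-by-edge comparison of the spectra of $G$ and $H$ restricted to products of two distinct primes. I would organise this according to the type of $G$: for the sporadic and small exceptional cases the element orders are read directly from the \textsc{Atlas} and the \textsc{GAP} character-table library, while for the generic classical and exceptional families one argues with the orders of the maximal tori and with unipotent element orders to determine which products $rs$ occur, and then tests whether the corresponding configurations persist in $H$. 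Collecting the pairs that survive yields Table~\ref{tab:main} together with the sufficiency asserted in the ``moreover'' clause.

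For $G = A_n$ the candidate subgroup is the set-stabilizer $H = (S_k \times S_{n-k}) \cap G$. Here $\pi(A_n)$ is the set of all primes at most $n$, and the hypothesis ``$p \leqs k$ for every prime $p \leqs n$'' is exactly what forces $\pi(H)=\pi(G)$, since it guarantees that one factor admits cycles of every relevant prime length; I would record this as the necessary constraint of part~(b). I do not expect a clean converse: deciding which edges $\{r,s\}$ of $\Gamma(A_n)$ are realised both in $A_n$ and in a product of two symmetric groups reduces to whether $r+s$ (suitably padded to respect the parity constraint of $A_n$) fits within the available support, a genuinely number-theoretic condition on the primes below $n$. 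This is why part~(b) is stated only as a necessary condition, with no accompanying sufficiency claim.

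The main obstacle will be the classical-group families. There the defining-characteristic prime $p$ interacts with the primes dividing the orders of the maximal tori, and showing that $H$ contains an element of order $rs$ whenever $G$ does requires a uniform analysis of which pairings of a unipotent element with a semisimple element of coprime order can coexist in the relevant subgroup. The delicate edges are those joining $p$ to a large primitive prime divisor of $q^i-1$, and those joining two such primitive prime divisors; for these one must compute torus orders precisely and invoke Zsigmondy-type results to locate the primes and confirm (or rule out) their joint realisation inside $H$. Once these generic families are settled, the remaining low-rank and sporadic configurations are dispatched by direct computation.
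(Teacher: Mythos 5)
Your proposal follows essentially the same route as the paper: reduce via $\pi(G)=\pi(H)$ to the Liebeck--Praeger--Saxl list in \cite[Corollary 5]{LPS}, exploit the automatic inclusion $\Gamma(H)\subseteq\Gamma(G)$ to turn the problem into an edge-by-edge check, dispatch the sporadic and low-rank entries by direct computation, handle the infinite classical families (cases (b)--(f) of Table \ref{tab:list}) via Zsigmondy primes and element-order analysis, and leave the $A_n$ set-stabilizer case as a necessary condition only because of its Goldbach-type number-theoretic content. The only cosmetic difference is that the paper carries out the classical-family analysis through explicit centraliser orders of prime-order elements (Wall's parameterisation, as in \eqref{e:1} and \eqref{e:2}, together with Lemma \ref{l:1}) rather than through maximal torus orders, but these amount to the same computation.
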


\begin{table}[h]
$$\begin{array}{llll} \hline
G & H & \mbox{Conditions} & s(G) \\ \hline
{\rm Sp}_{8}(q) & O_{8}^{-}(q) & \mbox{$q$ even} & 2 \\
{\rm P\Omega}_{8}^{+}(q) & \Omega_{7}(q) & \mbox{$q$ odd} & 1+\delta_{3,q} \\
\Omega_{8}^{+}(q) & {\rm Sp}_{6}(q) & \mbox{$q$ even} & 1+\delta_{2,q} \\
{\rm Sp}_{4}(q) & O_{4}^{-}(q) & \mbox{$q$ even} & 2 \\
\Omega_{8}^{+}(2) & P_1, P_3, P_4, A_9 & & 2 \\
{\rm L}_{6}(2) & P_1, P_5 & & 2 \\
{\rm Sp}_{6}(2) & O_{6}^{+}(2) & & 2\\
{\rm U}_{4}(2) & P_2, {\rm Sp}_{4}(2) & & 2 \\
{\rm U}_{4}(3) & A_7 & & 2 \\
G_2(3) & {\rm L}_{2}(13) & & 3 \\
A_6 & {\rm L}_{2}(5) & & 3 \\
{\rm M}_{11} & {\rm L}_{2}(11) & & 3 \\ \hline
\end{array}$$
\caption{The cases $(G,H)$ in Theorem \ref{t:main}(a)}
\label{tab:main}
\end{table}

\begin{remk}\label{r:11}
Let us make some comments on the statement of Theorem \ref{t:main}:
\begin{itemize}\addtolength{\itemsep}{0.2\baselineskip}
\item[{\rm (i)}] The groups $G$ in Table \ref{tab:main} are listed up to isomorphism. For example, the cases $(G,H) = ({\rm PSp}_{4}(3), {\rm PSp}_{2}(9).2)$ and $(\Omega_5(3),{\rm PO}_{4}^{-}(3))$ are recorded as $(G,H) = ({\rm U}_{4}(2), {\rm Sp}_{4}(2))$.
\item[{\rm (ii)}] In Table \ref{tab:main}, $P_i$ denotes a maximal parabolic subgroup of $G$ that corresponds to deleting the $i$-th node in the corresponding Dynkin diagram for $G$. In the relevant cases, the precise structure of $P_i$ is as follows:
$$\begin{array}{ll}
G = \Omega_8^{+}(2): & P_1 \cong P_3 \cong P_4 \cong 2^6.{\rm L}_{4}(2)\\
G = {\rm L}_{6}(2): & P_1 \cong P_5 \cong 2^5.{\rm L}_{5}(2) \\
G = {\rm U}_{4}(2): & P_2 \cong 2^4.{\rm L}_{2}(4)
\end{array}$$
\end{itemize}
\end{remk}

Consider the case arising in part (b) of Theorem \ref{t:main}. Here, the problem of determining whether or not $\Gamma(G) = \Gamma(H)$ depends on some formidable open problems in number theory, such as Goldbach's conjecture. In this situation, we propose the following conjecture. 

\begin{conj}\label{c:1}
If $G=A_n$ and $H=(S_k \times S_{n-k}) \cap G$ as in part (b) of Theorem \ref{t:main}, then $\Gamma(G) = \Gamma(H)$ if and only if one of the following holds:
\begin{itemize}\addtolength{\itemsep}{0.2\baselineskip}
\item[{\rm (a)}] $(n,k) \in \{ (6,5), (10,7)\}$;
\item[{\rm (b)}] $n \geqs 25$ is odd, $k=n-1$ and $n-4$ is composite.
\end{itemize}
\end{conj}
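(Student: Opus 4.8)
\emph{Set} $m = n-k$. By the hypothesis in Theorem \ref{t:main}(b) every prime $p \leqs n$ satisfies $p \leqs k$, so in particular $k$ is at least the largest prime below $n$; by Bertrand's postulate this forces $m \leqs n/2$, and every prime we shall consider is at most $k$. Since $H \leqs A_n$ we have $\Gamma(H) \subseteq \Gamma(A_n)$, and the two graphs share the vertex set $\pi(A_n)=\pi(H)$. Hence $\Gamma(G)=\Gamma(H)$ if and only if \emph{every} edge of $\Gamma(A_n)$ is realised in $H$, and the whole problem is to decide when this happens. The plan is to first record the adjacencies of $\Gamma(A_n)$, then translate ``realised in $H$'' into an arithmetic condition, and finally read off the cases.

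The first step uses the standard rules for $\Gamma(A_n)$: for distinct odd primes $r,s$ one has $r\sim s$ iff $r+s\leqs n$ (take disjoint $r$- and $s$-cycles, an even permutation of order $rs$), while $2\sim p$ iff $n\geqs p+4$ (the minimal even permutation of order $2p$ has cycle type $(p,2,2)$). Next, an element of $H=(S_k\times S_m)\cap A_n$ is a pair $(\sigma,\tau)$ with $\sigma\in S_k$, $\tau\in S_m$ and $\mathrm{sgn}(\sigma)\mathrm{sgn}(\tau)=1$, of order $\mathrm{lcm}(|\sigma|,|\tau|)$, so realising a prescribed order amounts to distributing cycles of prescribed lengths across the two blocks (of sizes $k\geqs m$) subject to the single global parity constraint. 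Carrying out this bookkeeping gives a clean criterion. For an odd edge $\{r,s\}$ with $r<s$ and $r+s\leqs n$ both cycles are even, so parity is no obstruction and the edge is realised in $H$ precisely when $r+s\leqs k$ (both cycles in the large block) or $r\leqs m$ (the smaller cycle in the small block); thus it \emph{fails} exactly when $r>m$ and $r+s>k$. For the edges $\{2,p\}$ the analysis is parity-sensitive: when $m\geqs 2$ every such edge of $\Gamma(A_n)$ is realised in $H$, with the sole exception of $m=3$ and $p=n-4$, where the small block is too small to both supply the factor $2$ and correct the parity (so for $m=3$ one additionally needs $n-4$ composite), whereas for $m=1$ the edge $\{2,p\}$ is realised in $H\cong A_{n-1}$ iff $p\leqs n-5$.

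The case $m=1$, where $H\cong A_{n-1}$, can then be settled completely and unconditionally. Combining the two criteria gives $\Gamma(A_{n-1})=\Gamma(A_n)$ iff $n$ is composite (equal vertex sets, already guaranteed by the hypothesis), no two distinct odd primes sum to $n$, and $n-4$ is not prime. For odd $n$ the middle condition holds automatically by parity, and one recovers part (b): the smallest odd composite $n$ with $n-4$ also composite is $n=25$ (the values $9,15,21$ being ruled out since there $n-4$ is prime), explaining the bound $n\geqs 25$. For even $n$ the only value surviving the ``no two distinct odd primes sum to $n$'' test is $n=6$, yielding $(n,k)=(6,5)$ in part (a); for every even $n\geqs 8$ a distinct Goldbach partition provides a missing odd edge.

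The remaining case $m\geqs 2$ is where the genuine difficulty lies. Here the odd-edge criterion shows that $\Gamma(G)=\Gamma(H)$ fails precisely when there exist primes $m<r<s$ with $n-m<r+s\leqs n$ (together with the minor $m=3$ proviso above); that is, when some even number in the window $(n-m,\,n]$ is a sum of two distinct primes, both necessarily exceeding $m$. A direct check shows this never happens for $(n,m)=(10,3)$ (the only pair of primes exceeding $3$ sums to $12>10$), giving the case $(n,k)=(10,7)$; and this is the only solution with $m\geqs 2$. The obstacle is the \emph{only if} direction: proving that for every other admissible $(n,k)$ with $m\geqs 2$ such a representation exists. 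Since $m$ is small relative to $n/2$, the restriction ``both primes exceed $m$'' is essentially free, so this reduces to producing a Goldbach representation of an even integer near $n$ inside a window of length $m$. This is decidable computationally throughout the (enormous) range in which Goldbach's conjecture has been verified, but an unconditional proof for all large $n$ appears to require a quantitative form of Goldbach's conjecture. This is exactly the number-theoretic difficulty flagged before the statement, and is why the result is posed as Conjecture \ref{c:1} rather than proved outright.
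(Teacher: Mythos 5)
Your reduction to cycle-type arithmetic is sound, and for the most part it tracks the paper's Section \ref{s:alt}: the adjacency rules for $\Gamma(A_n)$, the criterion that an odd edge $\{r,s\}$ fails in $H$ exactly when $r>m$ and $r+s>k$ (where $m=n-k$), and the parity analysis isolating the $m=3$, $p=n-4$ exception are all correct. Your unconditional treatment of odd $n$ with $k=n-1$ recovers exactly Lemma \ref{l:gc2}, by direct bookkeeping where the paper instead invokes Zvezdina's lemma, and your ``window'' formulation for $m\geqs 2$ is in substance the second (unlabelled) conjecture of Section \ref{s:alt}, that every even $n \geqs 12$ is $r+s$ with $r<s<p$ for $p$ the largest prime below $n$. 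Since the statement under discussion is posed in the paper as Conjecture \ref{c:1}, the paper itself proves only these fragments, and your closing paragraph rightly explains why a complete proof is out of reach.

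There is, however, one genuine error of attribution in your argument: the claim that the case $m=1$ ``can be settled completely and unconditionally'' is false for even $n$. There you assert that for every even $n \geqs 8$ a distinct Goldbach partition provides a missing odd edge; but the existence of a representation $n=r+s$ with \emph{distinct} primes $r,s$ is itself an open Goldbach-type problem, not a theorem, and the paper's Lemma \ref{l:gc1} makes precisely this point: for $G=A_{p+1}$ and $H=A_p$ with $p \geqs 7$ prime, the inequality $\Gamma(G) \neq \Gamma(H)$ is \emph{equivalent} to $p+1$ being a sum of two distinct primes. Consequently the number-theoretic dependence is not confined to the $m \geqs 2$ window case, as your final paragraph suggests; it already obstructs the ``only if'' direction at $m=1$ for even $n \geqs 8$. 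Everything you establish unconditionally (odd $n$ with $k=n-1$; the verifications for $(n,k)=(6,5)$ and $(10,7)$; the $m=3$ parity exception) is correct and consistent with the paper, but the conditional status of your argument must be flagged in the $m=1$, even-$n$ case as well.
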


We refer the reader to Section \ref{s:alt} for further comments on this conjecture. 
In particular, Lemma \ref{l:gc2} states that if $n \geqs 15$ is odd and $k=n-1$, then  
$\Gamma(G) = \Gamma(H)$ if and only if $n-4$ is composite. It is also easy to check that 
$\Gamma(G) = \Gamma(H)$ if we are in one of the two cases in part (a).

We can extend Theorem \ref{t:main} by removing the condition that $H$ is a maximal subgroup:

\begin{corol}\label{cor:1}
Let $G$ be a finite simple group and let $H$ be a proper subgroup of $G$. If $G=A_n$, then assume $H$ is transitive. Then $\Gamma(G) = \Gamma(H)$ if and only if one of the following holds:
\begin{itemize}\addtolength{\itemsep}{0.2\baselineskip}
\item[{\rm (a)}] $H$ is a maximal subgroup and $(G,H)$ is one of the cases listed in Table \ref{tab:main};
\item[{\rm (b)}] $H$ is a second maximal subgroup and $(G,H)=(\Omega_{8}^{+}(2),O_{6}^{+}(2))$ or $({\rm U}_{4}(2),O_{4}^{-}(2))$.
\end{itemize}
\end{corol}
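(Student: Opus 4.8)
The plan is to leverage Theorem \ref{t:main} together with the monotonicity of the prime graph under subgroup inclusion. If $H \leq M \leq G$, then $\Gamma(H)$ is a subgraph of $\Gamma(M)$, which is a subgraph of $\Gamma(G)$; so if $H$ is proper with $\Gamma(H) = \Gamma(G)$ and $M$ is any maximal subgroup of $G$ containing $H$, then the chain $\Gamma(H) \subseteq \Gamma(M) \subseteq \Gamma(G) = \Gamma(H)$ forces $\Gamma(M) = \Gamma(G)$. By Theorem \ref{t:main}, either $(G,M)$ lies in Table \ref{tab:main} or $G = A_n$ and $M = (S_k \times S_{n-k}) \cap G$ is intransitive. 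The task then reduces to determining, for each such $M$, the proper subgroups $H < M$ with $\Gamma(H) = \Gamma(M)\,(=\Gamma(G))$. A useful shortcut operates one level down: if $M$ is simple, the same sandwich argument reduces the analysis of its proper subgroups to Theorem \ref{t:main} applied to $M$, so whenever $M$ occurs neither as a group $G$ in Table \ref{tab:main} nor with a viable alternating configuration, it has no proper subgroup with full prime graph and contributes nothing beyond $H = M$.

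First I would dispose of the alternating case. When $G = A_n$ we assume $H$ is transitive, and a transitive group has no intransitive overgroup; hence the maximal subgroup $M \supseteq H$ cannot be of the type in part (b) of Theorem \ref{t:main} and must be a table entry. The only alternating group appearing as $G$ in Table \ref{tab:main} is $A_6$, with $M = \mathrm{L}_2(5) \cong A_5$ in its transitive action on six points. Since any proper subgroup of $A_5$ with $\pi = \{2,3,5\}$ would have order divisible by $30$ and none exists, we get $H = M$ and no new examples.

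Next I would work through the rows of Table \ref{tab:main} with $M$ finite. For the simple groups $M$ not arising as a $G$ in a surviving case of Theorem \ref{t:main} — such as $\mathrm{L}_2(13) < G_2(3)$, $\mathrm{L}_2(11) < \mathrm{M}_{11}$ and $A_7 < \mathrm{U}_4(3)$ (for the last, the alternating configuration would require $k \geq 7 > n-1$ and so is vacuous) — the shortcut settles matters immediately. The remaining $M$ are either non-simple (the parabolics $P_i$ and $\mathrm{Sp}_4(2) \cong S_6$) or simple and appearing as a table $G$ ($\mathrm{Sp}_6(2)$); these I would treat by a direct finite computation, running through the maximal subgroups of each $M$ using the known subgroup structure and element-order data, and recursing whenever a candidate survives. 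This produces exactly two chains, $O_6^+(2) < \mathrm{Sp}_6(2) < \Omega_8^+(2)$ (with $\mathrm{Sp}_6(2)$ a table $G$ whose table entry $O_6^+(2)$ realises the full prime graph) and $O_4^-(2) \cong S_5 < \mathrm{Sp}_4(2) \cong S_6 < \mathrm{U}_4(2)$ (where $\Gamma(S_5) = \Gamma(S_6) = \Gamma(\mathrm{U}_4(2))$ is checked directly). I would then confirm that $O_6^+(2)$ and $O_4^-(2)$ are maximal in $\mathrm{Sp}_6(2)$ and $S_6$ respectively, so these subgroups are genuinely second maximal in $G$, and that the chains terminate: $O_6^+(2) \cong S_8$ and $O_4^-(2) \cong S_5$ have no proper subgroup with the required prime graph — for $S_5$ the only proper subgroup with $\pi = \{2,3,5\}$ is $A_5$, which lacks an element of order $6$, so $\Gamma(A_5) \neq \Gamma(S_5)$ — ruling out any third-maximal example.

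The hard part will be the two infinite families in which $M$ is almost simple with an index-$2$ simple socle, namely $M = O_8^-(q) = \Omega_8^-(q).2$ inside $\mathrm{Sp}_8(q)$ and $M = O_4^-(q) = \mathrm{PSL}_2(q^2).2$ inside $\mathrm{Sp}_4(q)$, both with $q$ even. (The other two families, $\Omega_7(q) < \mathrm{P\Omega}_8^+(q)$ and $\mathrm{Sp}_6(q) < \Omega_8^+(q)$ with $q > 2$, have simple $M$ not occurring as a table $G$ and so succumb to the shortcut.) Here the socle is itself a maximal subgroup of $M$, so the crux is to show that it nonetheless fails to realise $\Gamma(G)$: the outer field- or graph-type automorphism generating $M/\mathrm{soc}(M)$ must produce elements whose orders are divisible by primes lying in distinct connected components of $\Gamma(\mathrm{soc}(M))$, so that $\Gamma(\mathrm{soc}(M))$ has strictly more components than $\Gamma(M) = \Gamma(G)$ and is a proper subgraph. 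Establishing this, together with the analogous (easier) statement for the other maximal subgroups of $M$, amounts to an almost simple analogue of Theorem \ref{t:main} for these specific $M$, and requires careful control of semisimple element orders via primitive prime divisors and the component counts $s(G)$ recorded in Table \ref{tab:main}. Once this is in place, these families contribute nothing beyond part (a), and the proof is complete.
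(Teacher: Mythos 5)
Your overall architecture matches the paper's: the sandwich argument $\Gamma(H) \subseteq \Gamma(M) \subseteq \Gamma(G)$ forcing $\Gamma(M)=\Gamma(G)$ for any intermediate maximal $M$, the appeal to Theorem \ref{t:main} to pin down $(G,M)$, the exclusion of intransitive $M$ when $G=A_n$ via transitivity of $H$, and direct computation for the finitely many small cases (yielding exactly the chains $O_6^+(2) < {\rm Sp}_6(2) < \Omega_8^+(2)$ and $O_4^-(2) < {\rm Sp}_4(2) < {\rm U}_4(2)$, as in the paper). Your shortcut for simple $M$ not occurring as a $G$ in Table \ref{tab:main} is also the paper's argument for $\Omega_7(q) < {\rm P\Omega}_8^+(q)$ and ${\rm Sp}_6(q) < \Omega_8^+(q)$. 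However, your treatment of the two families $M = O_{2m}^-(q) < {\rm Sp}_{2m}(q)$ ($m \in \{2,4\}$, $q$ even) — which you rightly identify as the crux — contains a genuine error. You propose to show $\Gamma(T) \neq \Gamma(G)$ for the socle $T=\Omega_{2m}^-(q)$ by arguing that $\Gamma(T)$ has \emph{strictly more connected components} than $\Gamma(G)$. This works for $m=2$ (where $T \cong {\rm L}_2(q^2)$ has three components against two for ${\rm Sp}_4(q)$), but it is false for $m=4$: both $\Gamma(\Omega_8^-(q))$ and $\Gamma({\rm Sp}_8(q))$ have exactly two components, with second component $\pi(q^4+1)$ in each (see \cite[Tables 1-3]{KM}). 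In particular $2$ is not isolated in $\Gamma(\Omega_8^-(q))$, since unipotent involutions there have centralisers of order divisible by $|{\rm Sp}_4(q)|$. The actual discrepancy is a missing \emph{edge} inside the large component: for $s$ a primitive prime divisor of $q^6-1$, a transvection in ${\rm Sp}_8(q)$ has centraliser containing ${\rm Sp}_6(q)$, so $2 \sim_G s$, whereas $\Omega_{2m}^-(q)$ contains no $b$-type involutions (\cite[8.10]{AS}) and Lemma \ref{inv1} then forces every odd prime divisor of an involution centraliser in $T$ to divide $|{\rm Sp}_{2m-4}(q)|$, so $2 \not\sim_T s$. Your component-counting mechanism cannot detect this, so as written your argument fails for the ${\rm Sp}_8(q)$ family.

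There is a second, structural gap: having shown $\Gamma(T) \neq \Gamma(G)$, you must still dispose of the subgroups $H < M$ \emph{not} containing the socle, and here you only gesture at proving "an almost simple analogue of Theorem \ref{t:main}" for $M = O_{2m}^-(q)$ by centraliser analysis. That would be a substantial classification in its own right (with no clear termination of the recursion through the maximal subgroups of $M$), and it is unnecessary: the paper closes this case in one step by observing that any such $H$ satisfies $\pi(H) = \pi(M) \supseteq \pi(T)$, and the almost-simple form of \cite[Corollary 5]{LPS} applies — since $\Omega_{2m}^-(q)$ does not appear in the first column of \cite[Table 10.7]{LPS}, no subgroup of $M$ omitting the socle can have full prime set. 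Note that Theorem \ref{p:lps} as stated in the paper covers only simple groups, so invoking the full almost-simple version of \cite[Corollary 5]{LPS} is precisely the tool your sketch is missing; with it, both the edge-based socle argument and the non-socle case become short, and your remaining analysis goes through.
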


\begin{remk}
Suppose $G=A_n$ and $H$ is an intransitive subgroup of $G$. If the above conjecture holds,  then $\Gamma(G) = \Gamma(H)$ if and only if $H$ is maximal and $(G,H)$ is one of the cases in the statement of the conjecture.
\end{remk}


\vs

\noindent \textbf{Notation.} Our group-theoretic notation is standard, and we adopt the notation of Kleidman and Liebeck \cite{KL} for simple groups. In particular, we write 
$${\rm PSL}_{n}(q) = {\rm L}_{n}^{+}(q) = {\rm L}_{n}(q),\;\; {\rm PSU}_{n}(q) = {\rm L}_{n}^{-}(q) = {\rm U}_{n}(q)$$
and similarly ${\rm GL}_{n}^{-}(q) = {\rm GU}_{n}(q)$, etc. If $G$ is a simple orthogonal group, then we write $G = {\rm P\Omega}_{n}^{\e}(q)$, where $\e=+$ (respectively $-$) if $n$ is even and $G$ has Witt defect $0$ (respectively $1$), and $\e=\circ$ if $n$ is odd (in the latter case, we also write $G = \Omega_n(q)$). Following \cite{KL}, we will sometimes refer to the \emph{type} of a subgroup $H$, which provides an approximate description of the group-theoretic structure of $H$. In addition, $\delta_{i,j}$ denotes the familiar Kronecker delta.

\vs

\noindent \textbf{Organisation.} Finally, some words on the organisation of this paper. In Section \ref{s:prel} we record several preliminary results that we will need in the proofs of our main theorems. In particular, we state a special case of \cite[Corollary 5]{LPS}, which plays a major role in this paper, and we record some useful facts on the centralisers of prime order elements in symplectic and orthogonal groups. The proof of Theorem \ref{t:main} is given in Section \ref{s:proof}, and the special case arising in part (b) of Theorem \ref{t:main} is discussed in Section \ref{s:alt}. Finally, the proof of Corollary \ref{cor:1} is given in Section \ref{s:cor}.

\section{Preliminaries}\label{s:prel}

Let $G$ be a finite group, let $\pi(G)$ be the set of prime divisors of $|G|$ and let $\Gamma(G)$ be the prime graph of $G$. For primes $r,s \in \pi(G)$, we will write $r \sim_{G} s$ if $r$ and $s$ are adjacent in $\Gamma(G)$. In this section we record some preliminary results that will be useful in the proof of Theorem \ref{t:main}. We start with an easy observation.

\begin{lem}\label{l:1}
Let $G$ be a finite group and let $r,s \in \pi(G)$ be distinct primes. Then $r \sim_G s$ if and only if $s \in \pi(C_G(x))$ for some element $x \in G$ of order $r$.
\end{lem}

Let $H$ be a proper subgroup of $G$ and note that $\Gamma(G) = \Gamma(H)$ only if $\pi(G) = \pi(H)$. If $G$ is simple, then the cases with $\pi(G) = \pi(H)$ have been determined by Liebeck, Praeger and Saxl \cite{LPS}, and this result plays a major role in the proof of Theorem \ref{t:main}. 

\begin{thm}\label{p:lps}
Let $G$ be a finite simple group and let $H$ be a maximal subgroup of $G$. Then $\pi(G) = \pi(H)$ if and only if $(G,H)$ is one of the cases listed in Table \ref{tab:list}.
\end{thm}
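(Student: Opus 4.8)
The plan is to derive this directly from the classification of maximal factorisations of almost simple groups obtained by Liebeck, Praeger and Saxl; indeed, the statement is a specialisation of \cite[Corollary 5]{LPS}, and the underlying principle is the elementary observation that $\pi(G) = \pi(H)$ holds if and only if every prime dividing the index $|G:H|$ already divides $|H|$. Equivalently, no prime divisor of $|G|$ is lost on passing to $H$. So the problem reduces, via the classification of finite simple groups, to running through the possibilities for $G$ and for its maximal subgroups $H$, computing $|G:H|$ in each case and testing this divisibility condition.

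First I would dispose of the generic families using Zsigmondy's theorem. For $G$ of Lie type over $\mathbb{F}_q$ in characteristic $p$, the order $|G|$ factorises as a power of $q$ times cyclotomic-type factors $\Phi_d(q)$; a primitive prime divisor of $q^e-1$ (which exists for all but a short explicit list of pairs $(q,e)$) divides $|G|$ but fails to divide $|H|$ whenever $H$ contains no torus or Levi factor accommodating an element of that order. For the parabolic subgroups and the geometric subgroups in Aschbacher's classes $\mathcal{C}_1$--$\mathcal{C}_8$ this is carried out by comparing the exponents $e$ occurring in $|G|$ with those occurring in $|H|$, which rules out all but finitely many $(G,H)$. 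The alternating groups $A_n$ are handled using Bertrand's postulate and sharper results on primes in short intervals to locate a prime $r$ with $n/2 < r \leqslant n$ dividing $|A_n|$ but not $|H|$ for intransitive or imprimitive $H$, and the sporadic groups are settled by direct inspection of their orders and maximal subgroups.

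The main obstacle is the analysis of the irreducible almost simple maximal subgroups (Aschbacher's class $\mathcal{S}$), whose orders obey no uniform formula and for which the crude exponent comparison is unavailable; here one must invoke the detailed bounds and tables of \cite{LPS} on the subgroups arising in factorisations, and it is precisely these cases, together with the small-rank coincidences, that generate the sporadic-looking entries of Table \ref{tab:list}. The final step is to reconcile notation: the small simple groups are listed up to the exceptional isomorphisms recorded in Remark \ref{r:11}(i), and the parabolic subgroups are labelled by the deleted Dynkin diagram node as in Remark \ref{r:11}(ii), so it remains only to match the output of \cite[Corollary 5]{LPS} against the entries of Table \ref{tab:list}.
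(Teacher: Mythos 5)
Your proposal is correct and takes essentially the same route as the paper: the paper's entire proof is the observation that the statement is a special case of \cite[Corollary 5]{LPS}, with the relevant pairs $(G,H)$ read off from \cite[Table 10.7]{LPS}, which is exactly the citation your argument rests on. Your additional sketch of the internal analysis (Zsigmondy primes, Aschbacher classes, Bertrand's postulate for $A_n$) is a rough reconstruction of how \cite{LPS} establishes that corollary rather than anything the paper redoes, so it is harmless but unnecessary here.
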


\begin{table}[h]
$$\begin{array}{llll} \hline
& G & \mbox{Type of $H$} & \mbox{Conditions} \\ \hline
{\rm (a)} & A_n & (S_k \times S_{n-k}) \cap A_n & \mbox{$p$ prime, $p \leqs n \implies p \leqs k$} \\
{\rm (b)} &{\rm Sp}_{2m}(q) & O_{2m}^{-}(q) & \mbox{$m,q$ even} \\
{\rm (c)} &\Omega_{2m+1}(q) & O_{2m}^{-}(q) & \mbox{$m$ even, $q$ odd} \\
{\rm (d)} &{\rm P\Omega}_{2m}^{+}(q) & O_{2m-1}(q) & \mbox{$m$ even, $q$ odd} \\
{\rm (e)} &{\rm P\Omega}_{2m}^{+}(q) & {\rm Sp}_{2m-2}(q) & \mbox{$m,q$ even} \\
{\rm (f)} &{\rm PSp}_{4}(q) & {\rm Sp}_{2}(q^2) & \\
 &{\rm L}_{6}(2) & P_1, P_5 & \\
 &{\rm U}_{3}(3) & {\rm L}_{2}(7) & \\
 &{\rm U}_{3}(5) & A_7 & \\
 &{\rm U}_{4}(2) & P_2, {\rm Sp}_{4}(2) & \\
 &{\rm U}_{4}(3) & {\rm L}_{3}(4), A_7 & \\
 &{\rm U}_{5}(2) & {\rm L}_{2}(11) & \\
 &{\rm U}_{6}(2) & {\rm M}_{22} & \\
 &{\rm PSp}_{4}(7) & A_7 & \\
 &{\rm Sp}_{6}(2) & O_{6}^{+}(2) & \\
 &\Omega_{8}^{+}(2) & P_1, P_3, P_4, A_9 & \\
 &G_2(3) & {\rm L}_{2}(13) & \\
 &{}^2F_4(2)' & {\rm L}_{2}(25) & \\
 &{\rm M}_{11} & {\rm L}_{2}(11) & \\
 &{\rm M}_{12} & {\rm M}_{11}, {\rm L}_{2}(11) \\
 &{\rm M}_{24} & {\rm M}_{23} & \\
 &{\rm HS} & {\rm M}_{22} & \\
 &{\rm McL} & {\rm M}_{22} & \\
 &{\rm Co}_{2} & {\rm M}_{23} & \\
 &{\rm Co}_{3} & {\rm M}_{23} & \\ \hline
\end{array}$$
\caption{The cases $(G,H)$ in Theorem \ref{p:lps}}
\label{tab:list}
\end{table}

\begin{proof}
This is a special case of \cite[Corollary 5]{LPS}; the specific cases that arise are listed in 
\cite[Table 10.7]{LPS}.
\end{proof}

We refer the reader to \cite[Tables 5.1.A--C]{KL} for convenient lists of the orders of all finite simple groups. The following basic result on the divisibility of the orders of classical groups is an  immediate consequence.

\begin{lem}\label{l:orders}
Let $\ell$ and $m$ be integers such that $0 \leqs \ell<m$. Then the following hold:
\begin{itemize}\addtolength{\itemsep}{0.2\baselineskip}
\item[{\rm (a)}] $|{\rm GL}^{\e}_{m}(q)|$ is divisible by $|{\rm GL}^{\e}_{m-\ell}(q)|$;
\item[{\rm (b)}] $|{\rm Sp}_{2m}(q)|$ is divisible by $|{\rm Sp}_{2(m-\ell)}(q)|$ and $|O_{2m}^{\e}(q)|$;
\item[{\rm (c)}] $|O_{2m}^{\e}(q)|$ is divisible by $|O_{2(m-\ell)}^{\e'}(q)|$, unless $\ell=0$ and $\e \neq \e'$.
\end{itemize}
\end{lem}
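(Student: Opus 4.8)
The plan is to prove all three parts by writing down the explicit order formulas for the relevant classical groups, as recorded in \cite[Tables 5.1.A--C]{KL}, and then checking in each case that the quotient of the two orders is a positive integer. Writing $\e=\pm1$ for the two signs, the formulas I would use are
$$|{\rm GL}_m^{\e}(q)| = q^{m(m-1)/2}\prod_{i=1}^{m}(q^i-\e^i), \qquad |{\rm Sp}_{2m}(q)| = q^{m^2}\prod_{i=1}^{m}(q^{2i}-1),$$
together with $|O_{2m}^{\e}(q)| = 2q^{m(m-1)}(q^m-\e)\prod_{i=1}^{m-1}(q^{2i}-1)$. Every factor appearing is a positive integer, so the strategy throughout is simply to cancel common factors and verify that what remains lies in $\z$.

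For part (a), I would divide directly: since $m>m-\ell$, the quotient $|{\rm GL}_m^{\e}(q)|/|{\rm GL}_{m-\ell}^{\e}(q)|$ equals a nonnegative power of $q$ times $\prod_{i=m-\ell+1}^{m}(q^i-\e^i)$, each factor of which is a positive integer, so divisibility is immediate. The symplectic assertion in part (b) is identical in spirit, the quotient $|{\rm Sp}_{2m}(q)|/|{\rm Sp}_{2(m-\ell)}(q)|$ being $q^{m^2-(m-\ell)^2}\prod_{i=m-\ell+1}^{m}(q^{2i}-1)$.

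For the remaining claim in part (b), that $|O_{2m}^{\e}(q)|$ divides $|{\rm Sp}_{2m}(q)|$, the telescoping is slightly less trivial. After cancelling the products $\prod_{i=1}^{m-1}(q^{2i}-1)$ and using $(q^{2m}-1)/(q^m-\e)=q^m+\e$, the quotient reduces to $q^m(q^m+\e)/2$. The point to record is that $q^m$ and $q^m+\e$ are consecutive integers, so their product is even and the quotient is a positive integer for either sign of $\e$ and for every $q$.

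Part (c) is where the care is needed, and it is the only place the stated exception appears. After cancelling the factor $2$ and a nonnegative power of $q$, the quotient $|O_{2m}^{\e}(q)|/|O_{2(m-\ell)}^{\e'}(q)|$ reduces (up to that power of $q$) to
$$(q^m-\e)\cdot\frac{\prod_{i=1}^{m-1}(q^{2i}-1)}{\prod_{i=1}^{m-\ell-1}(q^{2i}-1)}\cdot\frac{1}{q^{m-\ell}-\e'} = (q^m-\e)\cdot\frac{\prod_{i=m-\ell}^{m-1}(q^{2i}-1)}{q^{m-\ell}-\e'}.$$
When $\ell\geqs1$ we have $m-\ell\geqs1$, so the numerator contains the factor $q^{2(m-\ell)}-1=(q^{m-\ell}-1)(q^{m-\ell}+1)$, which is divisible by $q^{m-\ell}-\e'$ whatever the sign $\e'$; dividing yields $q^{m-\ell}+\e'$, and what remains is a product of integers, hence a positive integer. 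The main (and only) obstacle is the boundary case $\ell=0$: here the product $\prod_{i=m-\ell}^{m-1}$ is empty, the quotient collapses to $(q^m-\e)/(q^m-\e')$, and this is an integer precisely when $\e=\e'$. This explains and isolates the exclusion of the case $\ell=0$, $\e\neq\e'$ in the statement.
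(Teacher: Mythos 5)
Your proof is correct and is exactly the argument the paper intends: the lemma is stated there as an immediate consequence of the order formulas in \cite[Tables 5.1.A--C]{KL}, and you have simply written out the routine cancellations, including the only two points needing any care (the parity of $q^m(q^m+\e)/2$, and the factor $q^{2(m-\ell)}-1$ absorbing $q^{m-\ell}-\e'$ when $\ell \geqs 1$). One small quibble that does not affect the validity of the proof: your closing remark that the $\ell=0$ quotient $(q^m-\e)/(q^m-\e')$ is an integer \emph{precisely} when $\e=\e'$ overlooks the degenerate cases $q^m \in \{2,3\}$ (for instance $|O_{2}^{-}(3)|/|O_{2}^{+}(3)|=2$), but since the lemma merely refrains from asserting divisibility in the excluded case rather than asserting its failure, your argument establishes everything that is claimed.
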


\subsection{Primitive prime divisors}\label{ss:ppd}

Let $q=p^f$ be a prime power and let $r$ be a prime dividing $q^e-1$. We say that $r$ is a \emph{primitive prime divisor} (ppd for short) of $q^e-1$ if $r$ does not divide $q^{i}-1$ for all $1 \leqs i <e$. A classical theorem of Zsigmondy \cite{Zsig} states that if $e \geqs 3$ then $q^e-1$ has a primitive prime divisor, unless $(q,e)=(2,6)$. Primitive prime divisors also exist when $e=2$, provided that $q$ is not a Mersenne prime. Note that if $r$ is a primitive prime divisor of $q^e-1$ then $r \equiv 1 \imod{e}$, and $r$ divides $q^n-1$ if and only if $e$ divides $n$.

\subsection{Centralisers}\label{ss:cent}

In order to handle the cases labelled (b) -- (f) in Table \ref{tab:list}, we need information on the orders of centralisers of elements of prime order in finite symplectic and orthogonal groups. In \cite{Wall}, Wall provides detailed information on the conjugacy classes in finite classical groups,  but we prefer to use an alternative description that is more suited to our specific needs.

Let $G = {\rm PSp}_{n}(q)$ be a symplectic group over $\mathbb{F}_{q}$, where $q=p^f$ and $p$ is a prime. Let $x \in G$ be an element of odd prime order $r \neq p$. Write $x = \hat{x}Z$, where $\hat{x} \in {\rm Sp}_{n}(q)$ and $Z = Z({\rm Sp}_{n}(q))$. Define
\begin{equation}\label{e:phi}
\Phi(r,q) = \min\{a \in \mathbb{N} \mid \mbox{$r$ divides $q^a-1$}\}
\end{equation}
and set $i = \Phi(r,q)$. Note that $i \leqs n$. As explained in \cite[Chapter 3]{BG} (also see \cite[Section 3]{Bur2}), the conjugacy class of $x$ is parameterised by a specific tuple 
$(a_1, \ldots, a_{t})$ of non-negative integers that encodes the rational canonical form of $\hat{x}$ on the natural ${\rm Sp}_{n}(q)$-module, where $t=(r-1)/i$ and $i \leqs i\sum_{j}a_j \leqs n$. If $i$ is odd, then this tuple satisfies the additional condition $a_{j} = a_{t/2+j}$ for $j = 1, \ldots, t/2$. 

More concretely, the $G$-class of $x$ corresponds to the tuple $(a_1, \ldots, a_{t})$ if and only if $\hat{x}$ is ${\rm Sp}_{n}(q)$-conjugate to a block-diagonal matrix of the form $[I_{\ell},\L_1^{a_1}, \ldots, \L_t^{a_t}]$, where $\ell = n - i\sum_{j=1}^ta_j$ and $\L_j^{a_j}$ denotes $a_j$ copies of an irreducible matrix $\L_j \in {\rm GL}_{i}(q)$ with eigenvalues $\{\omega,\omega^{q}, \ldots, \omega^{q^{i-1}}\}$ in $\mathbb{F}_{q^i}$ for some primitive $r$-th root of unity $\omega$. Moreover, the order of the centraliser $C_G(x)$ can be read off from the corresponding tuple as follows:
\renewcommand{\arraystretch}{1.2}
\begin{equation}\label{e:1}
|C_G(x)| = \left\{\begin{array}{ll}
2^{-a}|{\rm Sp}_{\ell}(q)|\prod_{j=1}^{t}|{\rm GU}_{a_j}(q^{i/2})| & \mbox{$i$ even} \\
2^{-a}|{\rm Sp}_{\ell}(q)|\prod_{j=1}^{t/2}|{\rm GL}_{a_j}(q^{i})| & \mbox{$i$ odd}
\end{array}\right.
\end{equation}
where $a=1$ if $q$ is odd, otherwise $a=0$.

There is a  very similar parameterisation of the conjugacy classes of elements of odd prime order $r \neq p$ in orthogonal groups. In particular, if $G = {\rm P\Omega}_{n}^{\e}(q)$ and the $G$-class of $x \in G$ corresponds to the tuple $(a_1, \ldots, a_t)$, then
\renewcommand{\arraystretch}{1.2}
\begin{equation}\label{e:2}
|C_G(x)| = \left\{\begin{array}{ll}
2^{-a}|{\rm SO}_{\ell}^{\e'}(q)|\prod_{j=1}^{t}|{\rm GU}_{a_j}(q^{i/2})| & \mbox{$i$ even} \\
2^{-a}|{\rm SO}_{\ell}^{\e'}(q)|\prod_{j=1}^{t/2}|{\rm GL}_{a_j}(q^{i})| & \mbox{$i$ odd}
\end{array}\right.
\end{equation}
for some integer $a \in \{0,1,2\}$, where $\ell$ is defined as above (again, if $i$ is odd then $a_{j} = a_{t/2+j}$ for $j = 1, \ldots, t/2$). Note that if $n$ is odd then $\ell$ is odd and thus $\e'=\e = \circ$. The situation when $n$ is even is slightly more complicated (see \cite[p.38]{Wall}):

\begin{rem}\label{r:1}
There are some additional conditions when $G = {\rm P\Omega}_{n}^{\e}(q)$ and $n$ is even.
\begin{itemize}\addtolength{\itemsep}{0.2\baselineskip}
\item[{\rm (i)}] Suppose $\e=+$. If $i$ is odd and $\ell>0$ then $\e'=+$. If $i$ is even then either $\sum_{j}a_j$ is even and $\e'=+$ (or $\ell=0$), or $\sum_{j}a_j$ is odd, $\ell>0$ and $\e'=-$. 
\item[{\rm (ii)}] Suppose $\e=-$. If $i$ is odd then $\ell>0$ and $\e'=-$. If $i$ is even then either 
$\sum_{j}a_j$ is odd and $\e'=+$ (or $\ell=0$), or $\sum_{j}a_j$ is even, $\ell>0$ and $\e'=-$.
\end{itemize}
\end{rem}

\renewcommand{\arraystretch}{1}

The following result will be useful in the proof of Theorem \ref{t:main}. 

\begin{lem}\label{l:order1}
Let $G$ be one of the groups ${\rm PSp}_{8}(q)$, ${\rm P\Omega}_{8}^{+}(q)$ or ${\rm PSp}_{4}(q)$, let $x \in G$ be an element of odd prime order $r \neq p$ and set $i = \Phi(r,q)$ and $\e=\pm 1$. Let $s \neq p$ be a prime divisor of $|C_G(x)|$. Then $s$ divides the integer $N(i)$ defined in Table \ref{tab:ni}.
\end{lem}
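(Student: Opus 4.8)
The plan is to read the non-$p$ prime divisors of $|C_G(x)|$ directly off the centraliser formulas (\ref{e:1}) and (\ref{e:2}). Write $n$ for the dimension of the natural module, so that $n=8$ when $G={\rm PSp}_8(q)$ or ${\rm P\Omega}_8^+(q)$, and $n=4$ when $G={\rm PSp}_4(q)$. Since $x \neq 1$, the tuple $(a_1,\ldots,a_t)$ parameterising the class of $x$ satisfies $1 \leqs \sum_j a_j$ and $i\sum_j a_j \leqs n$, so $\sum_j a_j \leqs n/i$ and only finitely many values of $i$ can occur. Moreover, when $i$ is odd the symmetry condition $a_j=a_{t/2+j}$ forces $\sum_j a_j$ to be even, whence $i\sum_j a_j \geqs 2i$; this immediately rules out odd $i>n/2$ and leaves only a handful of admissible tuples in each case.

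The key reduction is that, up to a power of $2$ and a power of $p$, every factor appearing in (\ref{e:1}) and (\ref{e:2}) contributes only prime divisors of explicit cyclotomic factors $q^b\pm 1$. Concretely, a prime $s \neq p$ divides $|{\rm Sp}_{2m}(q)|$ only if it divides $\prod_{j=1}^m(q^{2j}-1)$, it divides $|{\rm SO}_{2m}^{\e'}(q)|$ only if it divides $(q^m-\e')\prod_{j=1}^{m-1}(q^{2j}-1)$, it divides $|{\rm GL}_{a}(q^i)|$ only if it divides $\prod_{c=1}^{a}(q^{ic}-1)$, and it divides $|{\rm GU}_{a}(q^{i/2})|$ only if it divides $\prod_{c=1}^{a}(q^{ic/2}-(-1)^c)$. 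Setting $\ell=n-i\sum_j a_j$ as in (\ref{e:1}) and (\ref{e:2}), I would therefore fix the group and the value of $i$, enumerate the admissible tuples, and collect the resulting cyclotomic factors; their product is the integer $N(i)$ recorded in Table \ref{tab:ni}, with the sign $\e=\pm1$ encoding the Witt-type parameter $\e'$ of the orthogonal summand.

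Carrying this out, for each of the three groups and each admissible $i$ I would list every tuple $(a_1,\ldots,a_t)$ with $i\sum_j a_j\leqs n$ (subject to the parity condition when $i$ is odd), determine $\ell$ and, in the orthogonal case, the sign $\e'$ via Remark \ref{r:1}, and then read off the cyclotomic factors arising from the classical factor ${\rm Sp}_\ell(q)$ or ${\rm SO}_\ell^{\e'}(q)$ together with the ${\rm GU}$ or ${\rm GL}$ factors over the extension field. Verifying in each case that the union of these factors divides the single entry $N(i)$ completes the argument.

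The hard part here is bookkeeping rather than anything conceptual: I must be sure that no admissible tuple is overlooked, and in the orthogonal case that the sign rules of Remark \ref{r:1} are applied correctly to each summand, so that the exceptional factor $q^{\ell/2}-\e'$ is assigned the right sign in $N(i)$. Since the order of every individual centraliser is handed to us by (\ref{e:1}) and (\ref{e:2}), once the enumeration is complete the divisibility claims reduce to inspection.
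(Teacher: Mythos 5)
Your proposal matches the paper's proof: the paper likewise reads the non-$p$ prime divisors off the centraliser formulas \eqref{e:1} and \eqref{e:2}, runs through the admissible eigenvalue configurations (equivalently, the tuples $(a_1,\ldots,a_t)$, or the values of $\ell$), applies the sign constraints of Remark \ref{r:1} in the orthogonal case, and checks that the resulting cyclotomic factors divide $N(i)$ --- the paper simply works one representative case ($G={\rm P\Omega}_8^{+}(q)$, $i=2$) in detail and notes the others are similar. Your systematic enumeration, including the observation that the condition $a_j=a_{t/2+j}$ for odd $i$ forces $\sum_j a_j$ to be even and so bounds the admissible $i$, is a correct and complete version of the same argument.
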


\begin{table}[h]
$$\begin{array}{lll} \hline
G & i & N(i) \\ \hline
{\rm PSp}_{8}(q) & 2(3-\e) & q^4-\e \\
& 3(3-\e)/2 & (q+\e)(q^3-\e) \\
& 1,2 & (q^2+1)(q^6-1) \\
{\rm P\Omega}_{8}^{+}(q) & 4 & q^4-1 \\
& 3(3-\e)/2 & q^3-\e \\
& (3-\e)/2 & (q^4-1)(q^3-\e) \\
{\rm PSp}_{4}(q) & 4 & q^2+1 \\
& 1,2 & q^2-1 \\ \hline
\end{array}$$
\caption{The integer $N(i)$ in Lemma \ref{l:order1}}
\label{tab:ni}
\end{table}

\begin{proof}
We use 
the centraliser orders presented in \eqref{e:1} and \eqref{e:2}. For example, suppose $G = {\rm P\Omega}_{8}^{+}(q)$ and $i=2$. We claim that $s$ divides $N(2) = (q^4-1)(q^3+1)$. To see this, let $\ell$ denote the dimension of the $1$-eigenspace of $\hat{x}$ on the natural module for $\Omega_{8}^{+}(q)$, so $0 \leqs \ell \leqs 6$ is even. If $\ell=6$ then a combination of \eqref{e:2} and Remark \ref{r:1} implies that $s$ divides $|{\rm SO}_{6}^{-}(q)||{\rm GU}_{1}(q)|$, and the claim follows. Similarly, if $\ell=4$ then $s$ divides $|{\rm SO}_{4}^{+}(q)||{\rm GU}_{2}(q)|$ (note that $|{\rm GU}_{1}(q)|^2$ divides $|{\rm GU}_{2}(q)|$), and if $\ell = 2$ then $s$ divides $|{\rm SO}_{2}^{-}(q)||{\rm GU}_{3}(q)|$. Finally, if $\ell=0$ then $s$ divides $|{\rm GU}_{4}(q)|$. This justifies the claim, and the other cases are very similar.
\end{proof}

We will also need information on the conjugacy classes and centralisers of involutions and elements of order $p$ in symplectic and orthogonal groups. For involutions, we refer the reader to \cite[Section 4.5]{GLS} (for $p \neq 2$) and \cite{AS} (for $p=2$). The information we need for elements of order $p>2$ is given in \cite[Section 7.1]{LSbook}. See also \cite[Section 3]{Bur2} and \cite[Chapter 3]{BG}. It is routine to check the following two lemmas  on unipotent elements.

\begin{lem}\label{inv1}
Let $G = {\rm PO}_{2m}^{\e}(q)$, where $m \geqs 4$, let $x \in G$ be an element of order $p$ and let $s$ be a prime divisor of $|C_G(x)|$. Then either $s$ divides $|{\rm Sp}_{2m-4}(q)|$, or $q$ is even, $x \in O_{2m}^{\e}(q) \setminus \Omega_{2m}^{\e}(q)$ and $s$ divides $|{\rm Sp}_{2m-2}(q)|$.
\end{lem}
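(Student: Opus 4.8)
The plan is to exploit the fact that $x$, having order equal to the characteristic $p$, is a unipotent element, so that $C_G(x)$ is governed by the well-understood structure of centralisers of unipotent elements in classical groups (see \cite[Section 7.1]{LSbook}, \cite[Chapter 3]{BG} and \cite{Bur2}). First I would reduce the problem to the reductive part of the centraliser. Writing $\hat{x}$ for a preimage of $x$ in $O_{2m}^{\e}(q)$, the centraliser $C_{O_{2m}^{\e}(q)}(\hat{x})$ is an extension of a unipotent radical, which is a $p$-group contributing only the prime $p$, by a reductive subgroup $R$; moreover, the passage to the projective group ${\rm PO}_{2m}^{\e}(q)$ and the relevant component groups involve only the prime $2$. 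Since $m \geqs 4$, the order $|{\rm Sp}_{2m-4}(q)|$ is even (it contains the factor $q^2-1$ when $q$ is odd, and the factor $q^{(m-2)^2}$ when $q$ is even), so $2 \in \pi(|{\rm Sp}_{2m-4}(q)|)$. It therefore suffices to bound the prime divisors $s \neq p$ of $|R|$.

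For $q$ odd I would argue directly from the Jordan form of $\hat{x}$ on the natural module. Here $R$ is a direct product of orthogonal groups $O_{m_i}^{\e_i}(q)$, one for each odd Jordan block size $i$ of multiplicity $m_i$, and symplectic groups ${\rm Sp}_{m_i}(q)$, one for each even block size $i$, subject to the constraint that even block sizes occur with even multiplicity and $\sum_i i\,m_i = 2m$. Since $x \neq 1$, at least one block has size $\geqs 2$; the parity constraint then forces either an odd block of size $\geqs 3$ or at least two even blocks, so the multiplicity $m_1$ of the trivial block satisfies $m_1 \leqs 2m-3$, with equality only when a single block of size $3$ is present. In every case Lemma \ref{l:orders} (together with $|{\rm SO}_{2k+1}(q)| = |{\rm Sp}_{2k}(q)|$ and $|O_{2k}^{\e}(q)|$ dividing $|{\rm Sp}_{2k}(q)|$) shows that each factor of $R$, and hence $|C_G(x)|$, has all of its prime divisors $s \neq p$ dividing $|{\rm Sp}_{2m-4}(q)|$. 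This settles the claim for $q$ odd, where no $O_{2m}^{\e}(q)\setminus\Omega_{2m}^{\e}(q)$ element of odd order $p$ arises, since $O/\Omega$ is a $2$-group.

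The main obstacle is the case $q$ even, where $x$ is an involution and the classification by Jordan type must be replaced by the finer analysis of Aschbacher and Seitz \cite{AS}. Here I would run through the involution classes of $O_{2m}^{\e}(q)$ and compute the corresponding centraliser orders, checking in each case that the prime divisors $s \neq p$ divide $|{\rm Sp}_{2m-4}(q)|$, the single exceptional family being the orthogonal transvections $t_v$ attached to a nonsingular vector $v$ (those with $Q(v) \neq 0$). Such a transvection has Dickson invariant $1$, so it lies in $O_{2m}^{\e}(q)\setminus\Omega_{2m}^{\e}(q)$, and its centraliser is the stabiliser of the nonsingular point $\langle v\rangle$, which is isomorphic to ${\rm Sp}_{2m-2}(q)$; this yields precisely the exceptional conclusion that $s$ divides $|{\rm Sp}_{2m-2}(q)|$. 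The bulk of the remaining work is the routine but slightly delicate bookkeeping in even characteristic, namely verifying the centraliser orders of the other involution types and confirming that, apart from the transvections, none escapes the $|{\rm Sp}_{2m-4}(q)|$ bound.
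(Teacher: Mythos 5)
Your proposal is correct and takes essentially the same route the paper intends: the paper gives no explicit argument for Lemma \ref{inv1}, declaring it ``routine to check'' from precisely the sources you invoke, namely the Jordan-form description of unipotent centralisers in odd characteristic (\cite[Section 7.1]{LSbook}, \cite[Chapter 3]{BG}) and the Aschbacher--Seitz classification of involutions in even characteristic \cite{AS}, with the $b_1$-type transvections emerging as the unique exceptional family exactly as you say. Your only (harmless) imprecision is that the stabiliser of a nonsingular point $\langle v \rangle$ has order $2|{\rm Sp}_{2m-2}(q)|$ rather than being isomorphic to ${\rm Sp}_{2m-2}(q)$ --- it contains the transvection $t_v$ in the kernel of its action on $v^{\perp}/\langle v \rangle$ --- which does not affect the set of prime divisors since $p=2$ here.
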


\begin{lem}\label{inv2}
Let $G = {\rm PSp}_{2m}(q)$, where $m \geqs 2$, let $x \in G$ be an element of order $p$ and let $s$ be a prime divisor of $|C_G(x)|$. Then $s$ divides $|{\rm Sp}_{2m-2}(q)|$.
\end{lem}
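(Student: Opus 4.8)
The plan is to use the explicit description of the conjugacy classes and centralisers of unipotent elements in ${\rm Sp}_{2m}(q)$ recorded in \cite[Section 7.1]{LSbook} and \cite[Chapter 3]{BG}. Since $x$ has order $p$, it is a nontrivial unipotent element, and I would first pass from $G = {\rm PSp}_{2m}(q)$ to a preimage $\hat{x} \in {\rm Sp}_{2m}(q)$ of order $p$; the central quotient only affects the centraliser order by a factor dividing $\gcd(2,q-1)$, and $2$ divides $|{\rm Sp}_{2m-2}(q)|$ in all cases, so it is enough to bound the prime divisors of $|C_{{\rm Sp}_{2m}(q)}(\hat{x})|$. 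I would then recall that this centraliser decomposes as $U \rtimes R$, where $U$ is the unipotent radical (a $p$-group) and $R$ is a reductive complement. Consequently every prime $s \neq p$ dividing $|C_G(x)|$ already divides $|R|$, so it suffices to control the prime divisors of $|R|$.

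For $p$ odd, the class of $\hat{x}$ corresponds to a partition $(1^{m_1}, 2^{m_2}, \ldots)$ of $2m$ in which every odd part occurs with even multiplicity, and $R$ has the shape $\prod_{k\geqs 1} {\rm Sp}_{m_{2k}}(q) \times \prod_{k\geqs 0} O_{m_{2k+1}}^{\e_k}(q)$, where the multiplicities $m_{2k+1}$ are even. The key step is a dimension count. Using the orders in \cite[Tables 5.1.A--C]{KL}, every prime $s \neq p$ dividing $|{\rm Sp}_{2c}(q)|$ divides $\prod_{i=1}^{c}(q^{2i}-1)$, and since $q^{d}-\e$ divides $q^{2d}-1$, every prime $s \neq p$ dividing $|O_{2d}^{\e}(q)|$ divides $\prod_{i=1}^{d}(q^{2i}-1)$. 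Hence it suffices to show that each symplectic factor satisfies $m_{2k}/2 \leqs m-1$ and each orthogonal factor satisfies $m_{2k+1}/2 \leqs m-1$; then every prime divisor of $|R|$ divides $q^{2i}-1$ for some $i \leqs m-1$, and therefore divides $|{\rm Sp}_{2m-2}(q)|$.

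These bounds follow from the constraint $\sum_{j} j\,m_j = 2m$ together with $x \neq 1$. For an even part $2k$ with $k \geqs 1$ we have $2k\,m_{2k} \leqs 2m$, so $m_{2k}/2 \leqs m/(2k) \leqs m/2 \leqs m-1$ since $m \geqs 2$. For an odd part $2k+1$ with $k \geqs 1$ we have $(2k+1)m_{2k+1} \leqs 2m$ with $2k+1 \geqs 3$, giving $m_{2k+1}/2 \leqs m/3 \leqs m-1$. The only remaining factor is $O_{m_1}^{\e_0}(q)$, attached to the trivial part; here I would use $x \neq 1$, which forces $\sum_{j\geqs 2} j\,m_j \geqs 2$ and hence $m_1 = 2m - \sum_{j\geqs 2} j\,m_j \leqs 2m-2$, so $m_1/2 \leqs m-1$. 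This settles the odd characteristic case.

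For $p = 2$ the parameterisation of unipotent classes is more delicate, and I would appeal to the Aschbacher--Seitz description (see \cite{AS} and \cite[Chapter 3]{BG}); the reductive part of the centraliser is again a product of symplectic and orthogonal groups over $\F$ whose dimensions are governed by the same partition of $2m$, so the identical dimension count applies. I expect this $p=2$ case to be the main obstacle, since one must verify that the extra factors appearing in the Aschbacher--Seitz centralisers contribute no new primes and leave the dimension bounds intact; once this is checked, the conclusion $s \mid |{\rm Sp}_{2m-2}(q)|$ follows exactly as in the odd characteristic case.
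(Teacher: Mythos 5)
Your overall route is exactly the one the paper intends: the paper gives no written proof of this lemma, declaring it ``routine to check'' from the centraliser descriptions in \cite[Section 7.1]{LSbook}, \cite{AS} and \cite[Chapter 3]{BG} --- precisely the sources you invoke --- and your Levi-decomposition-plus-dimension-count is the natural way to make ``routine'' precise. However, you have transposed the two factor types in the reductive part of the centraliser. For $p$ odd and $\hat{x} \in {\rm Sp}_{2m}(q)$ unipotent with Jordan blocks $J_i$ of multiplicity $m_i$, the correct statement is that the \emph{odd} parts $i$ (which, as you correctly say, occur with even multiplicity) contribute symplectic factors ${\rm Sp}_{m_i}(q)$, while the \emph{even} parts contribute orthogonal factors $O_{m_i}^{\e}(q)$ --- not the other way around. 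A sanity check: for a transvection $[J_2,J_1^{2m-2}]$ the centraliser contains ${\rm Sp}_{2m-2}(q)$ acting on the fixed space (compare the proof of Proposition \ref{p6}, where $|C_G(x)| = q^{3}|{\rm Sp}_{2}(q)|$ in ${\rm PSp}_{4}(q)$); this is the factor ${\rm Sp}_{m_1}(q)$ attached to the part $1$, whereas your formula would attach $O_{2m-2}^{\e}(q)$ to the fixed space and the undefined group ``${\rm Sp}_{1}(q)$'' to the $J_2$-block. As written this causes two problems: ``${\rm Sp}_{m_{2k}}(q)$'' is meaningless when an even part has odd multiplicity (which is allowed, e.g.\ $m_2=1$ above), and orthogonal factors of \emph{odd} dimension genuinely occur, yet your bound list only covers $|O_{2d}^{\e}(q)|$.

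Fortunately the defect is local and your count survives the correction. Since $|O_{2d+1}(q)| = 2q^{d^2}\prod_{i=1}^{d}(q^{2i}-1)$, every prime divisor other than $p$ divides $2\prod_{i=1}^{d}(q^{2i}-1)$, and $2$ divides $|{\rm Sp}_{2m-2}(q)|$ in any case; with the factors correctly assigned, the required bounds become $m_i/2 \leqs m/3 \leqs m-1$ for odd parts $i \geqs 3$, $\lfloor m_i/2 \rfloor \leqs m/2 \leqs m-1$ for even parts $i \geqs 2$, and $m_1 \leqs 2m-2$ for the fixed space, where the factor is now ${\rm Sp}_{m_1}(q)$ and the conclusion is immediate; all of these hold for $m \geqs 2$, with your observation that $x \neq 1$ forces $m_1 \leqs 2m-2$ doing the same work as before. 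Your reduction from ${\rm PSp}_{2m}(q)$ to ${\rm Sp}_{2m}(q)$ is sound (the index of the image of $C_{{\rm Sp}_{2m}(q)}(\hat{x})$ in $C_G(x)$ divides $|Z| = \gcd(2,q-1)$, and $2$ divides $|{\rm Sp}_{2m-2}(q)|$), and deferring $p=2$ to \cite{AS} is legitimate: there $x$ is an involution of type $a_l$, $b_l$ or $c_l$ with $l \geqs 1$, and the reductive parts of the centralisers recorded in \cite{AS} are products of symplectic groups over $\F$ of dimension at most $2m-2$, so the identical count closes that case, as you anticipated.
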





\section{Proof of Theorem \ref{t:main}}\label{s:proof}

We start by reducing the proof of Theorem \ref{t:main} to the cases labelled (b) -- (f) in Table \ref{tab:list}. 

\begin{prop}\label{p1}
Let $G$ be a finite simple group and let $H$ be a maximal subgroup of $G$. Assume that $(G,H)$ is not one of the cases labelled (a) -- (f) in Table \ref{tab:list}. Then $\Gamma(G) = \Gamma(H)$ if and only if $(G,H)$ is one of the following:
$$\begin{array}{lllll}
(\Omega_{8}^{+}(2), P_i) & (\Omega_{8}^{+}(2), A_9) & ({\rm L}_{6}(2), P_j) & ({\rm Sp}_{6}(2), O_{6}^{+}(2)) &
({\rm U}_{4}(2), P_2) \\
({\rm U}_{4}(2), {\rm Sp}_{4}(2)) & ({\rm U}_{4}(3), A_7) 
& (G_2(3), {\rm L}_{2}(13)) & (A_6,{\rm L}_{2}(5)) & ({\rm M}_{11}, {\rm L}_{2}(11))   
\end{array}$$
where $i \in \{1,3,4\}$ and $j \in \{1,5\}$.
\end{prop}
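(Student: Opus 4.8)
The plan is to reduce at once to a finite list of candidate pairs and then to settle each one by comparing sets of element orders. The starting point is the trivial remark that, because $H \leqs G$, any element of $H$ of order $rs$ is also such an element of $G$; hence $\Gamma(H)$ is a subgraph of $\Gamma(G)$, and it shares the same vertex set precisely when $\pi(H)=\pi(G)$. Thus $\Gamma(G)=\Gamma(H)$ is equivalent to the conjunction of $\pi(G)=\pi(H)$ and the statement that every edge of $\Gamma(G)$ is realised inside $H$. Feeding $\pi(G)=\pi(H)$ into Theorem \ref{p:lps} forces $(G,H)$ into Table \ref{tab:list}; discarding the parametric families labelled (a)--(f) by hypothesis leaves only the finitely many individual rows of Table \ref{tab:list}, i.e.\ the pairs with $G$ equal to one of ${\rm L}_6(2)$, ${\rm U}_3(3)$, ${\rm U}_3(5)$, ${\rm U}_4(2)$, ${\rm U}_4(3)$, ${\rm U}_5(2)$, ${\rm U}_6(2)$, ${\rm PSp}_4(7)$, ${\rm Sp}_6(2)$, $\Omega_8^+(2)$, $G_2(3)$, ${}^2F_4(2)'$, ${\rm M}_{11}$, ${\rm M}_{12}$, ${\rm M}_{24}$, ${\rm HS}$, ${\rm McL}$, ${\rm Co}_2$ or ${\rm Co}_3$ (together with one or two small coincidences noted below). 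This is a finite checklist.

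For each candidate I would use the reformulation that, for distinct primes $r,s \in \pi(G)$, one has $r \sim_G s$ if and only if $rs \in \omega(G)$, where $\omega(G)$ denotes the set of element orders of $G$ (equivalently, one may invoke the centraliser criterion of Lemma \ref{l:1}). Since $\omega(H) \subseteq \omega(G)$, the pair satisfies $\Gamma(G)=\Gamma(H)$ exactly when $\omega(G)$ and $\omega(H)$ contain the same products $rs$ of distinct primes of $\pi(G)$. Consequently, to rule a pair out it is enough to produce a single product $rs$ lying in $\omega(G)\setminus\omega(H)$, whereas to confirm a pair one must verify that every such product of $\omega(G)$ already occurs in $\omega(H)$. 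All the element-order data required are recorded in the \textsc{Atlas} for these groups and their maximal subgroups, and the structures of the relevant parabolic subgroups are given explicitly in Remark \ref{r:11}(ii).

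Running through the list, the overwhelming majority of pairs are eliminated by a single missing edge, most often an element of order $10$ (the edge $2 \sim 5$) or $6$ (the edge $2 \sim 3$). For example, $({\rm U}_4(3),{\rm L}_3(4))$ fails because ${\rm L}_3(4)$ has no element of order $6$ while ${\rm U}_4(3)$ does, and $({\rm HS},{\rm M}_{22})$, $({\rm M}_{24},{\rm M}_{23})$, $({\rm McL},{\rm M}_{22})$, $({\rm Co}_2,{\rm M}_{23})$ and $({\rm Co}_3,{\rm M}_{23})$ all fail because the larger group has an element of order $10$ that the sporadic subgroup lacks; the remaining unitary and sporadic cases go the same way. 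The pairs that survive are precisely the ten listed in the statement. Here one must be careful with the exceptional isomorphisms flagged in Remark \ref{r:11}(i): in particular $H$ must be identified as the full orthogonal group in $({\rm Sp}_6(2),O_6^+(2))$ (where $O_6^+(2) \cong S_8$, not $\Omega_6^+(2) \cong A_8$), the pair $({\rm U}_4(2),{\rm Sp}_4(2))$ is read through ${\rm U}_4(2) \cong {\rm PSp}_4(3)$, and $(A_6,{\rm L}_2(5))$ must be included as the transitive copy of $A_5 \cong {\rm L}_2(5)$ inside $A_6$, which is not an intransitive subgroup and so is not an instance of family (a); in the last case both prime graphs are the edgeless graph on $\{2,3,5\}$.

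I expect the main obstacle to be not any single deep step but the sheer bookkeeping of a long finite case analysis, concentrated in two places. First, the eliminations are instantaneous once the spectra are to hand, but the surviving cases require the harder direction, namely checking that \emph{every} edge of $\Gamma(G)$ is present in $H$; for the parabolic subgroups $P_i$ of $\Omega_8^+(2)$ and ${\rm L}_6(2)$, and for $A_9 < \Omega_8^+(2)$, this means assembling the complete set of element orders of $H$ from its structure (for instance, producing an element of order $10$ by combining an order-$5$ element of the Levi factor with a centralised involution in the unipotent radical) rather than merely quoting one missing order. Second, correctly resolving the exceptional isomorphisms and distinguishing transitive from intransitive subgroups, as in the $O_6^+(2)$ and $A_6$ cases above, is exactly where an error could creep in, so these identifications need to be pinned down before any spectrum is compared.
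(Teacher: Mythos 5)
Your proposal follows essentially the same route as the paper: both invoke Theorem \ref{p:lps} to reduce to the finitely many non-parametric rows of Table \ref{tab:list} and then settle each candidate pair by a direct case check, the paper via {\sc Magma} and you via \textsc{Atlas} element-order (spectrum) comparisons, which is the same verification in substance. Your explicit handling of the exceptional identifications (e.g.\ $O_6^+(2) \cong S_8$ rather than $A_8$, and $(A_6,{\rm L}_2(5))$ as the transitive $A_5$) is correct and, if anything, more careful than the paper's one-line proof.
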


\begin{proof}
By Theorem \ref{p:lps}, we may assume that $(G,H)$ is one of the cases recorded in Table \ref{tab:list}. If $(G,H)$ is not one of the cases labelled (a) -- (f), then it is easy to determine whether or not $\Gamma(G) = \Gamma(H)$, using {\sc Magma} \cite{magma} for example. The result follows.
\end{proof}

In order to prove Theorem \ref{t:main}, it remains to deal with the cases labelled (b) -- (f) in Table \ref{tab:list}. Recall that the special case labelled (a) will be discussed separately in Section \ref{s:alt}.

\begin{prop}\label{p2}
Suppose $G = {\rm Sp}_{2m}(q)$ and $H = O_{2m}^{-}(q)$, where $m$ and $q$ are even. Then $\Gamma(G) = \Gamma(H)$ if and only if $m=2$ or $4$.
\end{prop}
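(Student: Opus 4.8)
The statement asserts that for $G = {\rm Sp}_{2m}(q)$ with $m,q$ even and $H = O_{2m}^{-}(q)$, the prime graphs coincide exactly when $m \in \{2,4\}$. Since $\pi(G) = \pi(H)$ already holds here (this is case (b) of Table~\ref{tab:list}), the two graphs have the same vertex set and the question is purely about edges. My plan is to split into two parts: first verify directly that $\Gamma(G) = \Gamma(H)$ for $m = 2$ and $m = 4$, and then show that for every even $m \geqs 6$ there exist primes $r \neq s$ that are adjacent in $\Gamma(G)$ but \emph{not} adjacent in $\Gamma(H)$, so the graphs differ.

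**Main direction: $m \geqs 6$.** The heart of the argument is to produce a ``missing edge'' in $H$. Since $H = O_{2m}^{-}(q) \leqs G$, every edge of $\Gamma(H)$ is automatically an edge of $\Gamma(G)$, so I only need to find primes adjacent in $G$ but not in $H$. The natural candidates come from primitive prime divisors. Let me take $r$ to be a primitive prime divisor of $q^{2m}-1$ and $s$ a primitive prime divisor of $q^{2m-2}-1$ (both exist by Zsigmondy, as in Section~\ref{ss:ppd}, since $q$ is even and $2m, 2m-2 \geqs 6$; I must check the one excluded case $(q,e)=(2,6)$ separately). In $G = {\rm Sp}_{2m}(q)$, a Singer-type torus of order $q^m+1$ contains elements of order $s$ (sitting in an ${\rm Sp}_{2m-2}(q)$-factor's complementary behaviour) while $r \mid q^m+1$ as well, giving an element of order $rs$; more carefully, I would use Lemmas~\ref{l:orders} and~\ref{l:1} together with the centraliser formula~\eqref{e:1} to show $r \sim_G s$. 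The key point is that in $H = O_{2m}^{-}(q)$, no element of order $r$ has $s$ in its centraliser: applying~\eqref{e:2} and Remark~\ref{r:1} to an element $x \in H$ of order $r$ with $i = \Phi(r,q) = 2m$, one finds $\ell = 0$ and the centraliser reduces to a cyclic torus whose order is coprime to $s$ (since $s$ is a ppd of $q^{2m-2}-1$, it cannot divide $q^m+1$ or the relevant ${\rm GU}$/${\rm GL}$-factors). Hence $r \not\sim_H s$ by Lemma~\ref{l:1}, so $\Gamma(G) \neq \Gamma(H)$.

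**The small cases and the expected obstacle.** For $m = 2$ and $m = 4$ I would verify $\Gamma(G) = \Gamma(H)$ directly. These are exactly $({\rm Sp}_4(q), O_4^-(q))$ and $({\rm Sp}_8(q), O_8^-(q))$, which appear in Table~\ref{tab:main}, so here I expect to invoke Lemma~\ref{l:order1}: for each prime $r$ and each value $i = \Phi(r,q)$, the table entry $N(i)$ captures the primes dividing $C_G(x)$, and I would match these against the corresponding centralisers in $H$ to confirm every $G$-edge is realised in $H$ as well. The main obstacle will be the $m \geqs 6$ direction: the delicate step is the \emph{non}-adjacency claim in $H$, since I must rule out \emph{all} elements $x$ of order $r$ and verify that none has $s \in \pi(C_H(x))$. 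This requires a careful case analysis on the tuple $(a_1,\dots,a_t)$ parameterising the class of $x$ and on the sign $\e'$ arising from Remark~\ref{r:1}, confirming in each case that the ppd $s$ of $q^{2m-2}-1$ divides none of the factors in~\eqref{e:2}. I would also need to handle the Zsigmondy exceptional case and any coincidences where a ppd fails to exist, treating those few values of $q$ by hand.
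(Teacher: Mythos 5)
There is a genuine gap, and it is fatal to your main direction. For $m \geqs 6$, consider your chosen primes: $r$ a ppd of $q^{2m}-1$ and $s$ a ppd of $q^{2m-2}-1$. Every element $x \in G = {\rm Sp}_{2m}(q)$ of order $r$ satisfies $i = \Phi(r,q) = 2m$, so in the notation of Section \ref{ss:cent} the constraint $i\sum_j a_j \leqs 2m$ forces $\sum_j a_j = 1$ with trivial fixed space, and \eqref{e:1} gives $|C_G(x)| = |{\rm GU}_{1}(q^m)| = q^m+1$ (a cyclic torus; note $a=0$ as $q$ is even). But $s$ divides $q^m+1$ only if $s$ divides $q^{2m}-1$, which for a ppd of $q^{2m-2}-1$ forces $(2m-2) \mid 2m$, i.e.\ $(m-1) \mid m$, impossible for $m \geqs 3$. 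Hence $s \notin \pi(C_G(x))$ for every $x \in G$ of order $r$, and Lemma \ref{l:1} gives $r \not\sim_G s$: the edge you propose to exploit does not exist in $\Gamma(G)$ at all. Your sentence asserting that the Singer-type torus of order $q^m+1$ ``contains elements of order $s$'' is precisely where the argument breaks, since that torus has order coprime to $s$. Your own (correct) analysis of $H$ shows the same collapse there, so the pair $(r,s)$ is non-adjacent in both graphs and distinguishes nothing; the case $m \geqs 6$ is left unproved.

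The fix requires choosing $r$ with \emph{small} $\Phi(r,q)$, so that elements of order $r$ in $G$ can have large fixed spaces and hence large centralisers, while the minus-type constraints of Remark \ref{r:1}(ii) obstruct the corresponding edge in $H$. The paper takes $\ell$ to be the smallest prime not dividing $m$ (Bertrand's Postulate gives $\ell < m/2$), $r$ a ppd of $q^{\ell}-1$ and $s$ a ppd of $q^{m-\ell}-1$; the element $x = [I_{2(m-\ell)},\Lambda_1,\Lambda_{t/2+1}]$ has centraliser of order $|{\rm Sp}_{2(m-\ell)}(q)||{\rm GL}_{1}(q^{\ell})|$, divisible by $s$, so $r \sim_G s$, and a divisibility analysis driven by the condition $\ell \nmid m$ shows that no element of order $r$ in $H = O_{2m}^{-}(q)$ has $s$ in its centraliser. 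That argument needs $m \geqs 8$; the case $m=6$ is handled separately with $r,s$ ppds of $q^8-1$ and $q^4-1$, where the sign constraint forces $|C_H(y)| = |O_4^{+}(q)||{\rm GU}_{1}(q^4)|$, which $s$ does not divide. Your outline for $m \in \{2,4\}$ --- a case analysis over $i = \Phi(r,q)$ via Lemma \ref{l:order1}, matching centralisers in $H$, with $r=2$ handled by transvections --- is essentially the paper's argument and is a sound plan, but it does not rescue the main direction.
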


\begin{proof}
First assume $m \geqs 8$. We claim that $\Gamma(G) \neq \Gamma(H)$. To see this, let $\ell$ be the smallest prime that does not divide $m$. Note that $\ell$ is odd since $m$ is even. 
By Bertrand's Postulate, there exists a prime $\ell'$ such that $m/4<\ell'<m/2$, so $\ell'$ does not divide $m$ and thus $\ell<m/2$.  

Let $r$ be a primitive prime divisor (ppd) of $q^{\ell}-1$ and let $s$ be a ppd of $q^{m-\ell}-1$ (such primes exist by Zsigmondy's Theorem, as discussed in Section \ref{ss:ppd}). Then $r,s \in \pi(G)$, and we note that $r \neq s$ since $\ell<m-\ell$ as noted above. As explained in Section \ref{ss:cent}, there exists an element $x \in G$ of order $r$ such that $|C_G(x)| = |{\rm Sp}_{2(m-\ell)}(q)||{\rm GL}_{1}(q^{\ell})|$ (in the notation of Section \ref{ss:cent}, we can take $x = [I_{2(m-\ell)},\L_1,\L_{t/2+1}]$), so $s$ divides $|C_G(x)|$ and thus $r \sim_G s$ by Lemma \ref{l:1}. 

Let $y \in H$ be an element of order $r$, and suppose $s$ divides $|C_{H}(y)|$. The $1$-eigenspace of $y$ has dimension $2(m-b\ell) \geqs 2$ for some positive integer $b$ (the $1$-eigenspace is nontrivial by Remark \ref{r:1}), and it is easy to see that $s$ does not divide $|O_{2(m-b\ell)}^{-}(q)|$. Therefore, $s$ must divide $|{\rm GL}_{b}(q^{\ell})|$. Clearly, this is impossible if $b<m/\ell-1$, so we must have $b \geqs m/\ell-1$.  As noted above, we also have 
$m-b\ell \geqs 1$, so 
$$m/\ell-1 \leqs b \leqs (m-1)/\ell.$$
Now $(b-1)\ell<m-\ell$, so by considering $|{\rm GL}_{b}(q^{\ell})|$ we deduce that $s$ must divide $q^{b\ell}-1$, so $c(m-\ell)=b\ell$ for some positive integer $c$. But $\ell<m/2$ and thus $2(m-\ell)>m-1 \geqs b\ell$, so $c=1$ is the only possibility. This implies that $b=m/\ell-1$, which is a contradiction since $\ell$ does not divide $m$. We conclude that $r \not\sim_H s$ and thus $\Gamma(G) \neq \Gamma(H)$.

Next suppose $m=6$. Again, we claim that $\Gamma(G) \neq \Gamma(H)$. Let $r$ and $s$ be primitive prime divisors of $q^8-1$ and $q^4-1$, respectively. There is an element $x \in G$ of order $r$ with $|C_G(x)| = |{\rm Sp}_{4}(q)||{\rm GU}_{1}(q^4)|$ (take $x = [I_{4},\Lambda_1]$), so $r \sim_G s$. However, if $y \in H$ has order $r$ then $|C_H(y)| = |O_4^{+}(q)||{\rm GU}_{1}(q^4)|$ is the only possibility (see Remark \ref{r:1}), and thus $s$ does not divide $|C_H(y)|$. Therefore, $r \not\sim_H s$ and once again we deduce that $\Gamma(G) \neq \Gamma(H)$.

Finally, let us assume that $m=4$ or $2$. Here we claim that $\Gamma(G) = \Gamma(H)$. Let $r,s \in \pi(G)$ be primes such that $r<s$ and $r \sim_G s$. In order to show that $r \sim_H s$ we will identify an element $y \in H$ of order $r$ such that $|C_H(y)|$ is divisible by $s$. For the sake of brevity, we will assume that $m=4$ (the case $m=2$ is very similar, and easier). 

If $r=2$ then Lemma \ref{inv2} implies that $s$ divides $|{\rm Sp}_{6}(q)|$, and we deduce that $r \sim_{H} s$ since $|C_H(y)| = 2|{\rm Sp}_{6}(q)|$ for any transvection $y \in H$ (in the terminology of \cite{AS}, $y$ is a $b_1$-type involution); see  \cite[p.94]{Bur2}, for example. Now assume that $r$ is odd. Note that $s$ is also odd. Set $i = \Phi(r,q)$ (see \eqref{e:phi}) and suppose $y \in H$ has order $r$. If $i=8$ then Lemma \ref{l:order1} implies that $s$ divides $q^4+1$, and the desired result follows since $|C_H(y)| = |{\rm GU}_{1}(q^4)|$. Similarly, if $i=4$ then $|C_H(y)| = |O_{4}^{+}(q)||{\rm GU}_{1}(q^2)|$ is the only possibility (see Remark \ref{r:1}), and the result follows since $s$ divides $q^4-1$. Next suppose $i=2$, so $s$ divides $(q^2+1)(q^6-1)$. If $s$ divides $(q^2+1)(q^3-1)$ then take $y = [I_{6},\Lambda_1] \in H$ (in the notation of Section \ref{ss:cent}), in which case $|C_H(y)| = |O_{6}^{+}(q)||{\rm GU}_{1}(q)|$ is divisible by $s$. On the other hand, if $s$ divides $q^3+1$ then take $y = [I_{2},\Lambda_1^3] \in H$ so that $s$ divides $|C_H(y)| = |O_{2}^{+}(q)||{\rm GU}_{3}(q)|$. It follows that 
$r \sim_H s$ when $i=2$. The cases $i \in \{1,3,6\}$ are very similar, and we omit the details.
%
\end{proof}

\begin{prop}\label{p3}
Suppose $G = \Omega_{2m+1}(q)$ and $H$ is of type $O_{2m}^{-}(q)$, where $m$ is even and $q$ is odd. Then $\Gamma(G) = \Gamma(H)$ if and only if $(m,q) = (2,3)$.
\end{prop}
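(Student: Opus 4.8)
The plan is to treat the generic case $m \geqs 4$ by a single uniform argument and then to handle the exceptional case $m=2$ separately, where the dependence on $q$ emerges. Throughout I write $q=p^f$, and I recall from Theorem \ref{p:lps} that $\pi(G)=\pi(H)$, so it suffices to locate a pair of primes that are adjacent in one graph but not the other. For $m \geqs 4$ I would take $r=p$ and let $s$ be a primitive prime divisor of $q^{2(m-1)}-1$, which exists by Zsigmondy's theorem since $q$ is odd and $2(m-1)\geqs 6$. To see that $p \sim_G s$, decompose the natural module as $V=U\perp W$ with $U$ nondegenerate of dimension $3$ and $W$ nondegenerate of minus type and dimension $2(m-1)$, so that $\Omega_3(q)\times\Omega_{2(m-1)}^{-}(q)\leqs G$. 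Choosing a nontrivial unipotent element $x\in\Omega_3(q)$ (so $x$ has order $p$), the factor $\Omega_{2(m-1)}^{-}(q)$ centralises $x$; since $q^{m-1}+1$ divides $|\Omega_{2(m-1)}^{-}(q)|$ and $s$ divides $q^{m-1}+1$, we get $s \mid |C_G(x)|$, hence $p \sim_G s$ by Lemma \ref{l:1}.

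For the non-adjacency in $H$ I would invoke Lemma \ref{inv1}. Let $y\in H$ be any element of order $p$. As $q$ is odd, Lemma \ref{inv1} shows that every prime divisor of $|C_H(y)|$ divides $|{\rm Sp}_{2m-4}(q)|$. But $s$ is a primitive prime divisor of $q^{2(m-1)}-1$, and $2(m-1)$ divides none of $2,4,\ldots,2(m-2)$, so $s\nmid |{\rm Sp}_{2m-4}(q)|$. Thus $s\nmid |C_H(y)|$ for all such $y$, giving $p\not\sim_H s$ and therefore $\Gamma(G)\neq\Gamma(H)$. This disposes of every even $m\geqs 4$ at once; in contrast with the symplectic situation of Proposition \ref{p2}, no separate treatment of $m=4,6$ is required.

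It remains to deal with $m=2$, where I expect the genuine difficulty to lie. Using the exceptional isomorphism $G=\Omega_5(q)\cong{\rm PSp}_4(q)$, the subgroup $H$ of type $O_4^{-}(q)$ is identified with the image of ${\rm Sp}_2(q^2).2\cong{\rm SL}_2(q^2).2$, so that $H\cong{\rm PSL}_2(q^2).2$, where the outer involution is induced by the field automorphism $t\mapsto t^q$; it is essential to confirm that this is the field (not the diagonal) extension, since ${\rm PGL}_2(q^2)$ would give a different graph. I would then compare the two prime graphs directly. The decisive feature is the adjacency of $p$ with an odd prime $\ell$ dividing $q^2-1$: in $G$ one has $p\sim_G\ell$, realised by an element of order $p\ell$ inside a subgroup ${\rm Sp}_2(q)\times{\rm Sp}_2(q)\leqs{\rm Sp}_4(q)$ (a unipotent element in one factor times a semisimple element of order $\ell$ in the other), whereas in $H$ one has $p\not\sim_H\ell$: every element of ${\rm PSL}_2(q^2)$ has order $p$ or dividing $(q^2\pm1)/2$, while every element outside ${\rm PSL}_2(q^2)$ has even order whose square lies in ${\rm PSL}_2(q^2)$, so in neither case is the order divisible by $p\ell$.

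Consequently $\Gamma(G)\neq\Gamma(H)$ whenever $q^2-1$ has an odd prime divisor. Since $q-1$ and $q+1$ are consecutive even integers, $q^2-1$ is a power of $2$ precisely when $\{q-1,q+1\}=\{2,4\}$, that is, $q=3$; so for every odd $q\geqs 5$ the graphs differ. The remaining case $q=3$ I would verify by hand: here $G\cong{\rm U}_4(2)$ and $H\cong{\rm Sp}_4(2)\cong S_6$ with $\pi(G)=\{2,3,5\}$, and a short check of element orders shows that in both groups $2\sim 3$ while $5$ is isolated, so $\Gamma(G)=\Gamma(H)$. The main obstacle is exactly this $m=2$ analysis: correctly pinning down the isomorphism type of $H$ (in particular that the outer $2$ is a field automorphism) and then confirming that the edge $p\sim\ell$ is the sole obstruction, so that $q=3$ really is the unique exception.
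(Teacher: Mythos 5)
Your proof is correct. For $m \geqs 4$ it is essentially the paper's argument: the same pair of primes ($r=p$ and $s$ a primitive prime divisor of $q^{2m-2}-1$) and the same appeal to Lemma \ref{inv1} for non-adjacency in $H$; the only difference is how you produce an element $x$ of order $p$ with $|C_G(x)|$ divisible by $|\Omega_{2m-2}^{-}(q)|$. The paper takes $x$ with Jordan form $[J_3,J_1^{2m-2}]$ and cites \cite[Theorem 7.1]{LSbook} to select the correct one of the two $G$-classes with this Jordan form, whereas you exhibit $x$ inside $\Omega_3(q)\times\Omega_{2m-2}^{-}(q) \leqs G$, which neatly sidesteps the class-splitting issue (and yields the same element, since a nontrivial unipotent in $\Omega_3(q)$ has Jordan form $[J_3]$). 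For $m=2$ you take a genuinely different route with the same pair of primes $(p,\ell)$, $\ell$ an odd divisor of $q^2-1$: the paper stays in the orthogonal picture, taking $x \in G$ with Jordan form $[J_2^2,J_1]$ so that $q^2-1$ divides $|C_G(x)|$, and noting that every nontrivial unipotent $y \in H$ has Jordan form $[J_3,J_1]$ with $|C_H(y)|=2q^2$; you instead pass through the exceptional isomorphism $\Omega_5(q) \cong {\rm PSp}_4(q)$, identify $H$ with ${\rm PSL}_2(q^2).2$, and do element-order bookkeeping. Both work, and your reduction ``$q^2-1$ is a power of $2$ iff $q=3$'' matches the paper's observation that $\ell$ exists for all odd $q \geqs 5$. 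One remark: the care you devote to pinning down whether the outer involution is a field or diagonal automorphism is actually unnecessary for your argument, since any element of odd order in a degree-two extension of ${\rm PSL}_2(q^2)$ lies in the socle, and even in ${\rm PGL}_2(q^2)$ the centraliser of a unipotent element is a $p$-group; so $p \not\sim_H \ell$ in every such extension, and what you flag as the main obstacle dissolves. The paper's uniform use of unipotent centralisers avoids the exceptional isomorphism altogether, at the cost of quoting the unipotent class data from \cite{LSbook}; your version is more self-contained for $m \geqs 4$ but relies on the identification of $H$ for $m=2$.
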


\begin{proof}
It is easy to check that $\Gamma(G) = \Gamma(H)$ if $(m,q)=(2,3)$, so let us assume that $(m,q) \neq (2,3)$. Suppose $m \geqs 4$. Set $r=p$ and let $x \in G$ be a unipotent element with Jordan form $[J_3, J_1^{2m-2}]$, where $J_i$ denotes a standard unipotent Jordan block of size $i$. By \cite[Theorem 7.1]{LSbook}, there are two $G$-classes of elements of this form, and we can choose $x$ so that $|C_G(x)|$ is divisible by $|\Omega_{2m-2}^{-}(q)|$. Let $s$ be a ppd of $q^{2m-2}-1$ and note that $s$ divides $|C_G(x)|$, so $r \sim_G s$. However, $s$ does not divide $|{\rm Sp}_{2m-4}(q)|$ and thus Lemma \ref{inv1} implies that $s$ does not divide $|C_H(y)|$ for any element $y \in H$ of order $p$. Therefore $r \not\sim_H s$ and we conclude that $\Gamma(G) \neq \Gamma(H)$. 

Finally, suppose $m=2$. Set $r=p$ and let $s$ be an odd prime divisor of $q^2-1$ (note that $s$ exists since $q \geqs 5$). Let $x \in G$ be a unipotent element with Jordan form $[J_2^2,J_1]$. Then $|C_G(x)|$ is divisible by $q^2-1$ (see \cite[Theorem 7.1]{LSbook}), so $r \sim_G s$. However, every nontrivial unipotent element $y \in H$ has Jordan form $[J_3,J_1]$, and we calculate that $|C_H(y)| = 2q^2$. Therefore $r \not\sim_H s$, and once again we conclude that $\Gamma(G) \neq \Gamma(H)$.
%
%
%
\end{proof}

\begin{rem}
The case $G = \Omega_5(3)$ with $H$ of type $O_{4}^{-}(3)$ arising in Proposition \ref{p3} is recorded as $(G,H) = ({\rm U}_{4}(2), {\rm Sp}_{4}(2))$ in Table \ref{tab:main}.
\end{rem}

\begin{prop}\label{p5}
Suppose $G = \Omega_{2m}^{+}(q)$ and $H = {\rm Sp}_{2m-2}(q)$, where $m$ and $q$ are even. Then $\Gamma(G) = \Gamma(H)$ if and only if $m=4$.
\end{prop}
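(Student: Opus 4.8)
The key structural fact is that $H = {\rm Sp}_{2m-2}(q)$ is a subgroup of $G = \Omega_{2m}^{+}(q)$ (this is case (e) of Table \ref{tab:list}), so every element of $H$ lies in $G$ and hence $\Gamma(H) \subseteq \Gamma(G)$ automatically, for every $m$. Therefore $\Gamma(G) = \Gamma(H)$ if and only if $\Gamma(G) \subseteq \Gamma(H)$; in particular, to prove that $m \neq 4$ forces $\Gamma(G) \neq \Gamma(H)$ it suffices to exhibit primes $r,s$ with $r \sim_G s$ but $r \not\sim_H s$. Throughout I will use that, since $q$ is even, every element of odd order lies in $\Omega_{2m}^{+}(q)$ and every primitive prime divisor of $q^a-1$ is odd, and I will read off centraliser orders from \eqref{e:1} and \eqref{e:2} together with Remark \ref{r:1}.

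Assume first that $m \geqs 8$. By Bertrand's Postulate there is an odd prime $a$ with $m/2 < a < m-2$; set $b = m-a$, so that $a,b$ are odd, $3 \leqs b < a$, $a+b = m$ and $\gcd(a,b)=1$ (as $a$ is a prime exceeding $b$). Let $r,s$ be primitive prime divisors of $q^a-1$ and $q^b-1$, which exist by Zsigmondy's Theorem since $a,b$ are odd and $\geqs 3$. As $a$ and $b$ are odd, the corresponding irreducible blocks are of plus type of dimensions $2a$ and $2b$, and $2a+2b = 2m$, so there is an element $x \in G$ of order $rs$ with $|C_G(x)|$ divisible by $(q^a-1)(q^b-1)$; hence $r \sim_G s$. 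Now let $y \in H$ have order $r$. By \eqref{e:1} the order of $C_H(y)$ divides $|{\rm Sp}_{\ell}(q)|\,|{\rm GL}_{k}(q^a)|$, where $\ell = 2m-2-2ak$ and $k \geqs 1$. Since $s$ is a ppd of $q^b-1$ and $\gcd(a,b)=1$, $s$ divides $|{\rm GL}_{k}(q^a)|$ only if $k \geqs b$, which forces $2ak \geqs 2ab > 2m-2$ because $(a-1)(b-1) > 0$; and $s$ divides $|{\rm Sp}_{\ell}(q)|$ only if $\ell \geqs 2b$, which is impossible as $\ell \leqs 2m-2-2a < 2b$. Hence $s \nmid |C_H(y)|$ for all such $y$, so $r \not\sim_H s$ and $\Gamma(G) \neq \Gamma(H)$.

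The case $m=6$ needs a separate pair, since there is no coprime pair of odd integers $\geqs 3$ summing to $6$; this small case is the main obstacle. Here take $r,s$ to be primitive prime divisors of $q^8-1$ and $q^4-1$, so that $\Phi(r,q)=8$ and $\Phi(s,q)=4$, with $r \mid q^4+1$ and $s \mid q^2+1$. The associated blocks have dimensions $8$ and $4$, both of minus type, so their orthogonal sum is of plus type of dimension $12 = 2m$; thus there is $x \in G = \Omega_{12}^{+}(q)$ of order $rs$ with $|C_G(x)|$ divisible by $(q^4+1)(q^2+1)$, giving $r \sim_G s$. On the other hand, any $y \in H = {\rm Sp}_{10}(q)$ of order $r$ has a single block of dimension $8$, so $|C_H(y)| = |{\rm Sp}_{2}(q)|\,|{\rm GU}_{1}(q^4)|$ by \eqref{e:1}; as $s$ is odd and divides $q^2+1$, it divides none of $q$, $q^2-1$ or $q^4+1$, so $r \not\sim_H s$ and again $\Gamma(G) \neq \Gamma(H)$.

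Finally suppose $m=4$, so that $G = \Omega_{8}^{+}(q)$ and $H = {\rm Sp}_{6}(q)$; I claim $\Gamma(G) \subseteq \Gamma(H)$, which together with $\Gamma(H) \subseteq \Gamma(G)$ gives equality. For an edge $\{p,s\}$ with $s \neq p$, Lemma \ref{inv1} shows $p \sim_G s$ only if $s \mid |{\rm Sp}_{4}(q)|$, and Lemma \ref{inv2} then yields $p \sim_H s$, so these edges agree. For odd $r,s$ with $r \sim_G s$, set $i = \Phi(r,q) \in \{1,2,3,4,6\}$; Lemma \ref{l:order1} forces $s$ to divide the integer $N(i)$ of Table \ref{tab:ni}, and in each case I produce $y \in {\rm Sp}_{6}(q)$ of order $r$ with $s \mid |C_H(y)|$ via \eqref{e:1}: if $i=6$ take $y$ irreducible with $|C_H(y)| = |{\rm GU}_{1}(q^3)| = q^3+1$; if $i=3$ take $|C_H(y)| = |{\rm GL}_{1}(q^3)| = q^3-1$; if $i=4$ take $|C_H(y)| = |{\rm Sp}_{2}(q)|\,|{\rm GU}_{1}(q^2)|$, which is divisible by $q^4-1$; and the cases $i \in \{1,2\}$ follow by choosing among the elements with centraliser ${\rm Sp}_{4}(q) \times {\rm GU}_{1}(q)$, ${\rm GU}_{3}(q)$ or ${\rm GL}_{3}(q)$ according to which factor of $N(i)$ is divisible by $s$. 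This is a finite verification (which may also be confirmed with {\sc Magma} \cite{magma}), and it establishes $\Gamma(G) = \Gamma(H)$, as recorded in Table \ref{tab:main}. The delicate point in the $m \geqs 8$ argument is precisely the use of $(a-1)(b-1) > 0$ to rule out every centraliser type in $H$, while the genuine novelty is the absence of a suitable coprime pair when $m=6$.
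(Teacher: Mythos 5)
Your proof is correct in substance and follows the same overall skeleton as the paper: the same three-way split ($m \geqs 8$, $m=6$, $m=4$), the same tools (Zsigmondy, Bertrand, the centraliser formulas \eqref{e:1} and \eqref{e:2} with Remark \ref{r:1}), and identical treatments of $m=6$ and $m=4$. Where you genuinely diverge is the $m \geqs 8$ case. The paper takes $\ell$ to be the smallest prime not dividing $m$, lets $r,s$ be ppds of $q^{\ell}-1$ and $q^{m-\ell}-1$, and excludes $s \mid |C_H(y)|$ via the inequalities $m/\ell - 1 \leqs b \leqs (m-1)/\ell$, reaching the contradiction $b = m/\ell - 1$ (the argument is imported from Proposition \ref{p2}). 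You instead use Bertrand's Postulate in its strong form (a prime in $(n,2n-2)$ for $n>3$, which is what you need to guarantee $a < m-2$; the interval $(m/2,m)$ alone would not suffice) to produce a coprime pair of odd numbers $a+b=m$ with $a>m/2$ prime, and then the exclusion in $H={\rm Sp}_{2m-2}(q)$ is immediate: a ${\rm GL}$-factor catching $s$ needs $k \geqs b$ blocks, hence dimension $2ab > 2m-2$ by $(a-1)(b-1)>0$, while the symplectic factor has $\ell \leqs 2b-2 < 2b$. This is cleaner than the paper's divisibility juggling, at the modest cost of invoking the sharper Bertrand statement; your construction of an element of order $rs$ directly (rather than an element of order $r$ whose centraliser order is divisible by $s$) is an equivalent formulation of $r \sim_G s$, and your block-type bookkeeping (plus $\perp$ plus and minus $\perp$ minus both of plus type) is correct.

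One small repair is needed in the $m=4$ case with $r=2$: you assert that Lemma \ref{inv2} ``yields $p \sim_H s$'', but Lemma \ref{inv2} is only a necessary condition on the prime divisors of unipotent centralisers in ${\rm Sp}_{2m}(q)$; it cannot by itself produce an involution whose centraliser order is divisible by $s$. You must exhibit one, as the paper does: take $y \in H = {\rm Sp}_{6}(q)$ to be a $b_1$-type transvection, so that $|C_H(y)| = q^{5}|{\rm Sp}_{4}(q)|$, and then every odd prime $s$ dividing $|{\rm Sp}_{4}(q)|$ (the constraint supplied by Lemma \ref{inv1} on the $G$-side) divides $|C_H(y)|$. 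With that substitution your argument is complete.
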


\begin{proof}
First assume $m \geqs 8$. As in the proof of Proposition \ref{p2}, let $r$ and $s$ be ppds of $q^{\ell}-1$ and $q^{m-\ell}-1$, respectively, where $\ell$ is the smallest prime number that does not divide $m$. Let $x \in G$ be an element of order $r$ with $|C_G(x)| = |\Omega_{2(m-\ell)}^{+}(q)||{\rm GL}_{1}(q^{\ell})|$. Then $s$ divides $|C_G(x)|$, so $r \sim_G s$. However, by repeating the argument in the proof of Proposition \ref{p2}, we deduce that $s$ does not divide $|C_H(y)|$ for any element $y \in H$ of order $r$. Therefore, $r \not\sim_H s$ and thus $\Gamma(G) \neq \Gamma(H)$. To reach the same conclusion when $m=6$ we proceed as in the proof of Proposition \ref{p2}, taking $r$ and $s$ to be primitive prime divisors of $q^8-1$ and $q^4-1$, respectively.

Finally, let us assume that $m=4$. We claim that $\Gamma(G) = \Gamma(H)$. To see this, we proceed as in the proof of Proposition \ref{p2}. Let $r,s \in \pi(G)$ be primes such that $r<s$ and $r \sim_G s$. We need to find an element $y \in H$ of order $r$ with the property that $s$ divides $|C_H(y)|$. If $r=2$ then $s$ divides $q^4-1$ (see Lemma \ref{inv1}) and we can take $y \in H$ to be a $b_1$-involution (that is, a transvection), so that $|C_H(y)| = q^5|{\rm Sp}_{4}(q)|$. Now assume that $r$ (and thus $s$) is odd. Set $i = \Phi(r,q) \in \{1,2,3,4,6\}$. We now consider each possibility for $i$ in turn, using Lemma \ref{l:order1}. For instance, suppose $i=2$, so $s$ divides $(q^4-1)(q^3+1)$. If $s$ divides $q^4-1$ then take $y = [I_4, \Lambda_1]$, otherwise take $y = [\Lambda_1^3]$. Then \eqref{e:1} indicates that $s$ divides  $|C_H(y)|$, so $r \sim_H s$ as required. The other cases are entirely similar, and we omit the details.
\end{proof}

\begin{prop}\label{p4}
Suppose $G = {\rm P\Omega}_{2m}^{+}(q)$ and $H$ is of type $O_{2m-1}(q)$, where $m$ is even and $q$ is odd. Then $\Gamma(G) = \Gamma(H)$ if and only if $m=4$.
\end{prop}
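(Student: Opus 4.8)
The plan is to follow the template of the proofs of Propositions \ref{p2} and \ref{p5}. First note that $m$ even and $G$ simple forces $m \geqs 4$, so the relevant cases are $m=4$ and $m \geqs 6$. Since $H$ is a subgroup of $G$, every edge of $\Gamma(H)$ is an edge of $\Gamma(G)$, and by Theorem \ref{p:lps} we have $\pi(G)=\pi(H)$; hence $\Gamma(G)=\Gamma(H)$ if and only if every edge $r \sim_G s$ is also an edge $r \sim_H s$. For the negative cases ($m \geqs 6$) I would exhibit a single edge of $\Gamma(G)$ that is missing in $\Gamma(H) = \Gamma(\Omega_{2m-1}(q))$, and for $m=4$ I would show that every edge transfers. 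Throughout I would use the centraliser formula \eqref{e:2}, noting that in $H$ the natural module has odd dimension $2m-1$, so the fixed space of any semisimple element is odd and $\e'=\circ$.

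For $m \geqs 8$ I would mimic Proposition \ref{p5}. Let $e$ be the smallest prime not dividing $m$; as there, Bertrand's Postulate gives $e<m/2$, and $e$ is odd. Let $r$ and $s$ be primitive prime divisors of $q^e-1$ and $q^{m-e}-1$ respectively. In $G$ there is an element $x$ with $|C_G(x)| = |{\rm SO}_{2(m-e)}^{+}(q)||{\rm GL}_1(q^e)|$, and since $q^{m-e}-1$ divides $|{\rm SO}_{2(m-e)}^{+}(q)|$ we get $r \sim_G s$. For $y \in H$ of order $r$, formula \eqref{e:2} gives $|C_H(y)| = 2^{-a}|{\rm SO}_{\ell}(q)|\prod_j |{\rm GL}_{a_j}(q^e)|$ with $\ell = (2m-1)-2e\sum_j a_j$ odd. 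The key arithmetic points are that $\gcd(m-e,e)=\gcd(m,e)=1$ (as $e$ is prime and $e \nmid m$) and that $m-e>m/2$, from which a dimension count shows that $s$ divides neither $|{\rm SO}_{\ell}(q)|$ nor any $|{\rm GL}_{a_j}(q^e)|$; hence $r \not\sim_H s$ and $\Gamma(G) \neq \Gamma(H)$. The case $m=6$ is handled separately as in Proposition \ref{p5}, taking $r$ and $s$ to be primitive prime divisors of $q^8-1$ and $q^4-1$: here $r \sim_G s$ because $C_G(x)$ has a factor $|{\rm SO}_4^{-}(q)|$ divisible by $q^4-1$, while in $H=\Omega_{11}(q)$ the only possibility is $|C_H(y)| = |{\rm SO}_3(q)||{\rm GU}_1(q^4)|$, which is coprime to $s$.

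For $m=4$ we have $G = {\rm P\Omega}_8^{+}(q)$ and $H = \Omega_7(q)$, and I would show that every edge transfers, splitting on the order $r$ of a chosen element. For odd $r \neq p$ I would use Lemma \ref{l:order1} exactly as in Proposition \ref{p5}: setting $i = \Phi(r,q) \in \{1,2,3,4,6\}$, the lemma restricts the possible $s$ to the divisors of $N(i)$, and for each case I would exhibit an explicit $y \in \Omega_7(q)$ (for example $y=[I_3,\Lambda_1]$ when $i=4$, giving $|C_H(y)| = |{\rm SO}_3(q)||{\rm GU}_1(q^2)|$, or $y=[I_1,\Lambda_1^3]$ when $s$ divides $q^3+1$) whose centraliser is divisible by $s$, using \eqref{e:2}. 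For $r=p$ the element is unipotent: Lemma \ref{inv1} shows that $p \sim_G s$ forces $s$ to divide $|{\rm Sp}_4(q)|$, and I would realise each such $s$ by a suitable unipotent class of $\Omega_7(q)$, arguing as in Proposition \ref{p3}. The edge $\{2,p\}$ is immediate in both groups, since involution centralisers contain elements of order $p$.

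The remaining and most delicate case is $r=2$: since $q$ is odd, involutions are semisimple and are not covered by Lemmas \ref{l:order1} or \ref{inv1}. Here I would appeal to the description of involution centralisers in \cite[Section 4.5]{GLS}. If $2 \sim_G s$ with $s \neq p$, then $s$ divides the order of the centraliser of some involution of $\Omega_8^{+}(q)$, whose connected centraliser is of type $O_{2a}^{\pm}(q) \times O_{8-2a}^{\pm}(q)$ or ${\rm GL}_4(q)$ or ${\rm GU}_4(q)$; this forces $i = \Phi(s,q) \in \{1,2,3,4,6\}$. I would then check that every such $s$ already occurs in the centraliser of an involution of $\Omega_7(q)$ whose $(-1)$-eigenspace has dimension $6$, since such an involution has centraliser containing a factor $\Omega_6^{\pm}(q)$, and $|{\rm SO}_6^{+}(q)|$ (respectively $|{\rm SO}_6^{-}(q)|$) supplies the cyclotomic factors for $i \in \{1,2,3,4\}$ (respectively $i=6$). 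I expect this involution bookkeeping to be the main obstacle, as it requires matching the centraliser data in the two groups for every value of $i$, but it should be routine given the tables in \cite{GLS}.
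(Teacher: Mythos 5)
Your proposal follows essentially the same route as the paper: for $m \geqs 6$ the paper simply reuses the argument of Proposition \ref{p5} (your ppd choices, the centraliser shapes from \eqref{e:2} with Remark \ref{r:1}, and the coprimality count $\gcd(m-e,e)=1$, $\sum_j a_j \leqs (m-1)/e < m-e$ are exactly what that reduction amounts to), and for $m=4$ the paper makes the same three-way split: odd $r \neq p$ via Lemma \ref{l:order1} as in Proposition \ref{p5}, $r=p$ via Lemma \ref{inv1} together with the $[J_3,J_1^4]$-element of $\Omega_7(q)$ whose centraliser is divisible by $|\Omega_4^{-}(q)|$ (the element from Proposition \ref{p3} that you cite), and $r=2$ via the involution data in \cite[Table 4.5.1]{GLS}.

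One concrete warning about your $r=2$ step, which is the only place your sketch is wrong as stated. You claim that every $s$ with $\Phi(s,q) \in \{1,2,3,4,6\}$ is realised by an involution of $\Omega_7(q)$ with $6$-dimensional $(-1)$-eigenspace, with $|{\rm SO}_6^{+}(q)|$ and $|{\rm SO}_6^{-}(q)|$ both available. They are not: for fixed odd $q$, the involution $[-I_6,I_1]$ with eigenspace of type $\e$ lies in $\Omega_7(q)$ (rather than ${\rm SO}_7(q) \setminus \Omega_7(q)$) only when $q \equiv \e \imod{4}$, by the spinor norm condition. So for $q \equiv 1 \imod{4}$ there is no involution in $H$ whose centraliser order is divisible by a ppd of $q^6-1$, and your plan would appear to break down for $i=6$. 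It is rescued by the matching restriction on the $G$-side, which your list of candidate centraliser types (including ${\rm GU}_4(q)$ and $O_2^{-} \times O_6^{-}$ unconditionally) overlooks: the same congruence governs which involution classes exist in ${\rm P\Omega}_8^{+}(q)$, so for $q \equiv 1 \imod{4}$ one finds $s \mid (q^3-1)(q^4-1)$ (no $i=6$ primes are adjacent to $2$ in $\Gamma(G)$ either), and the $t_3$-involution of $H$ with centraliser containing $\Omega_6^{+}(q)$ suffices, with signs switched when $q \equiv 3 \imod{4}$. This is precisely why the paper's treatment of $r=2$ splits into the cases $q \equiv 1, 3 \imod{4}$ when reading off \cite[Table 4.5.1]{GLS}; since you defer to those tables anyway, your argument is repairable, but the bookkeeping must be done per congruence class on both sides simultaneously, not by assuming both signs of $\Omega_6^{\pm}(q)$ occur in $\Omega_7(q)$.
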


\begin{proof}
For $m \geqs 6$ we can argue as in the proof of the previous proposition, so let us assume that $m=4$, so $H = \Omega_7(q)$ (see \cite[Proposition 4.1.6]{KL}). As before, let $r,s \in \pi(G)$ be primes such that $r<s$ and $r \sim_G s$. Our aim is to find an element $y \in H$ of order $r$ with the property that $s$ divides $|C_H(y)|$. If $r \neq p$ is odd, then we can repeat the argument in the proof of the previous proposition (for the case $m=4$). If $r=p$ then Lemma \ref{inv1} implies that $s$ divides $q^4-1$, and the desired result follows by taking $y \in H$ to be an element with Jordan form $[J_3,J_1^4]$ and the property that $|C_H(y)|$ is divisible by $|\Omega_{4}^{-}(q)|$ (the existence of such an element was discussed in the proof of Proposition \ref{p3}). 

Finally, let us assume that $r=2$. Detailed information on the conjugacy classes of involutions in $G$ and $H$ is given in \cite[Table 4.5.1]{GLS}, and the desired result quickly follows. For example, suppose that $q \equiv 1 \imod{4}$. The representatives of the involution classes in $G$ are labelled $t_1,t_2,t_3,t_4$ in \cite[Table 4.5.1]{GLS}, and we deduce that $s$ divides $(q^3-1)(q^4-1)$. Now if $y \in H$ is a $t_3$-type involution, then $|C_H(y)|$ is divisible by $|\Omega_{6}^{+}(q)|$ (see \cite[Table 4.5.1]{GLS}) and thus $r \sim_H s$. The case $q \equiv 3 \imod{4}$ is very similar.
\end{proof}

\begin{prop}\label{p6}
Suppose $G = {\rm PSp}_{4}(q)$ and $H$ is of type ${\rm Sp}_{2}(q^2)$, where $q \geqs 3$. Then $\Gamma(G) = \Gamma(H)$ if and only if $q=3$.
\end{prop}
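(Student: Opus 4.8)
The plan is to first dispose of the even case, then treat $q$ odd by separating $q=3$ from $q \geqslant 5$. Observe that when $q$ is even the exceptional isomorphism $\Omega_4^-(q) \cong {\rm L}_2(q^2)$ identifies $H$ with a subgroup of type $O_4^-(q)$; this possibility therefore belongs to case (b) of Table \ref{tab:list} and was already treated in Proposition \ref{p2} (with $m=2$). So I may assume that $q$ is odd, in which case $H \cong {\rm PSL}_2(q^2).2$, where ${\rm PSL}_2(q^2)$ is simple and the outer factor is induced by the field automorphism of $\mathbb{F}_{q^2}/\mathbb{F}_q$ (see \cite{KL}).

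For the base case $q=3$ we have $G = {\rm PSp}_4(3) \cong {\rm U}_4(2)$ and $H \cong {\rm PSL}_2(9).2 \cong S_6$. Here $\pi(G) = \{2,3,5\}$, and a direct calculation (or a quick {\sc Magma} computation) shows that $\Gamma(G) = \Gamma(H)$: in each case the graph consists of the single edge $\{2,3\}$ together with the isolated vertex $5$. It therefore remains to prove that $\Gamma(G) \neq \Gamma(H)$ whenever $q \geqslant 5$.

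The distinguishing idea is that the defining characteristic $p$ has genuinely different neighbourhoods in the two graphs. Since $q \geqslant 5$ is odd, $q^2-1 = (q-1)(q+1)$ is not a power of $2$ (otherwise $q-1$ and $q+1$ would be powers of $2$ differing by $2$, forcing $q=3$), so I may fix an odd prime $r$ dividing $q^2-1$; note that $r \neq p$ and $r \in \pi(G) = \pi(H)$. On the $G$-side I will show that $p \sim_G r$. Taking a unipotent element of order $p$ in one factor and a semisimple element of order $r$ in the other factor of a subgroup ${\rm Sp}_2(q) \times {\rm Sp}_2(q) \leqslant {\rm Sp}_4(q)$ produces an element of order $pr$, which survives in ${\rm PSp}_4(q)$ because $pr$ is odd; alternatively, a transvection $x$ of order $p$ has $|C_G(x)|$ divisible by $|{\rm Sp}_2(q)| = q(q^2-1)$, hence by $r$, and Lemma \ref{l:1} applies. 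On the $H$-side I will show that $p \not\sim_H r$, i.e.\ that $H$ has no element of order $pr$. The socle ${\rm PSL}_2(q^2)$ has none, since each of its elements is either unipotent of order $p$ or semisimple of order dividing $(q^2\pm1)/2$ and hence coprime to $p$; and every element of $H$ lying outside ${\rm PSL}_2(q^2)$ has even order, because $H/{\rm PSL}_2(q^2) \cong C_2$, whereas $pr$ is odd. Hence $r \sim_G p$ but $r \not\sim_H p$, and so $\Gamma(G) \neq \Gamma(H)$.

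The one delicate point is the upper bound on the neighbourhood of $p$ in $H$: the argument depends entirely on the structural fact that $H$ possesses a simple normal subgroup ${\rm PSL}_2(q^2)$ of index $2$, which lets me control element orders in both cosets. Everything else is routine, and it is precisely the existence of the odd prime $r$ — which fails exactly when $q=3$ — that pins down $q=3$ as the unique case of equality.
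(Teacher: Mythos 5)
Your treatment of odd $q$ is correct and is in substance the paper's own argument: the paper likewise separates the characteristic $p$ from an odd prime divisor $s$ of $q^2-1$ (which exists for odd $q$ precisely when $q>3$), obtains $p \sim_G s$ from a transvection with $|C_G(x)| = q^3|{\rm Sp}_2(q)|$, and obtains $p \not\sim_H s$ from the fact that every element of order $p$ in $H$ has centraliser of order $2^kq^2$. Your version of the second half, via element orders in the index-two extension ${\rm PSL}_2(q^2).2$ (an element of odd order $pr$ would have to lie in the socle, which has no such element orders), is an equivalent and perfectly sound way to run it, and the base case $q=3$ is handled the same way in both proofs.

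The gap is the even case. The statement as printed covers all $q \geqslant 3$, so for $q \in \{4,8,16,\ldots\}$ you must prove $\Gamma(G) \neq \Gamma(H)$; instead you ``dispose of'' even $q$ by deferring to Proposition \ref{p2} with $m=2$ --- but the conclusion of Proposition \ref{p2} at $m=2$ is that $\Gamma({\rm Sp}_4(q)) = \Gamma(O_4^-(q))$ for \emph{all} even $q$ (this is the fourth row of Table \ref{tab:main}). So, taken literally, your reduction derives the \emph{negation} of the claim for even $q \geqslant 4$ rather than a proof of it, and the contradiction is passed over in silence with ``so I may assume $q$ is odd''. In fact your identification is essentially correct: for $q$ even, $O_4^-(q) = \Omega_4^-(q).2 \cong {\rm Sp}_2(q^2).2$ with an outer involution of field type, and the two maximal subgroups are interchanged by the graph automorphism of ${\rm Sp}_4(q)$, so they have the same prime graph. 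This exposes a real discrepancy with the paper's own even-$q$ argument in Proposition \ref{p6}: the claim that $|C_H(y)| = 2^2q^2$ for every involution $y \in H$ fails for the involutions in $H \setminus {\rm Sp}_2(q^2)$ inducing the field automorphism, whose centralisers contain ${\rm Sp}_2(q)$ and hence have order divisible by the odd primes dividing $q^2-1$; so for even $q$ one does get $2 \sim_H s$, and indeed $\Gamma(G)=\Gamma(H)$ there, consistent with Proposition \ref{p2}. The coherent reading is that Proposition \ref{p6} carries an implicit ``$q$ odd'' hypothesis, the even case being the $({\rm Sp}_4(q), O_4^-(q))$ entry of Table \ref{tab:main} up to isomorphism of the pair. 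As a blind proof of the statement as printed, however, your write-up leaves even $q$ either unproved or disproved: you needed to either prove the inequality for even $q$ (impossible, as your own reduction shows) or explicitly flag that the statement requires $q$ odd, rather than treating the even case as settled in the direction you need.
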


\begin{proof}
The case $q=3$ can be checked directly, so let us assume that $q \geqs 4$. Let $r=p$ and let $s$ be any odd prime divisor of $q^2-1$ (note that $s$ exists since $q \geqs 4$). Let $x \in G$ be a transvection, so $x$ has Jordan form $[J_2,J_1^2]$ and $s$ divides $|C_G(x)| = q^3|{\rm Sp}_{2}(q)|$. Therefore, $r \sim_G s$. However, $|C_H(y)| = 2^kq^2$ for all $y \in H$ of order $r$ (where $k=1+\delta_{2,p}$), so $r \not\sim_H s$. We conclude that $\Gamma(G) \neq  \Gamma(H)$ if $q \geqs 4$.
\end{proof}

\begin{rem}
Note that the case $G = {\rm PSp}_{4}(3)$ with $H$ of type ${\rm Sp}_{2}(9)$ arising in Proposition \ref{p6} is recorded as $(G,H) = ({\rm U}_{4}(2), {\rm Sp}_{4}(2))$ in Table \ref{tab:main}.
\end{rem}

\vs

This completes the proof of Theorem \ref{t:main}.

\section{Intransitive subgroups of alternating groups}\label{s:alt}

In this section, we consider the special case labelled (a) in Table \ref{tab:list}, which arises in part (b) of Theorem \ref{t:main}. Here $G = A_n$ and $H = (S_k \times S_{n-k}) \cap G$, where $1 < k < n$ is an integer such that $p \leqs k$ for every prime $p \leqs n$. 

Since $k<n$, the condition on $k$ implies that $n$ is composite. If $5 < n <12$ then it is easy to check that $\Gamma(G) = \Gamma(H)$ if and only if $(G,H) = (A_6,A_5)$ or $(A_{10}, (A_7 \times A_3).2)$. Now assume that $n \geqs 12$. We make the following conjecture:

\begin{con}
If $n \geqs 12$, then $\Gamma(G) = \Gamma(H)$ if and only if $n$ is odd, $k=n-1$ and $n-4$ is composite.
\end{con}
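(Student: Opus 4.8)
The plan is to translate everything into elementary combinatorics of cycle types and then isolate the arithmetic input. Since $H\leqs G$ we always have $\Gamma(H)\subseteq\Gamma(G)$, so the only task is to decide which edges of $\Gamma(G)$ fail to lie in $\Gamma(H)$. Write $P$ for the largest prime at most $n$ and $d=n-k$ for the size of the smaller block; the hypothesis $k\geqs P$ together with Bertrand's Postulate gives $d<n/2<P\leqs k$, so \emph{every} prime divisor of $|G|$ fits inside the $k$-block. First I would record the resulting adjacency rules. For odd primes $r<s$ we have $r\sim_G s$ iff $r+s\leqs n$, whereas placing an $r$-cycle and an $s$-cycle either both in the $k$-block or one in each block shows $r\sim_H s$ iff $r+s\leqs k$ or $r\leqs d$. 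For $r=2$ and $s$ odd, $2\sim_G s$ iff $s\leqs n-4$, and an analogous analysis (realising an order-$2s$ element as an $s$-cycle together with a sign-matched pair of transpositions distributed across the two blocks) pins down $2\sim_H s$ in terms of $d$.

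With the rules in hand I would split on $d=n-k$. When $d\geqs 4$ one checks $k\leqs n-4$, so every odd prime is adjacent to $2$ in both graphs and the prime $2$ contributes no missing edge; the discrepancies then come only from odd--odd pairs. When $d\in\{2,3\}$ the prime $2$ can already produce a missing edge, namely $s=n-4$ when $n-4$ is prime. Unwinding the odd--odd rules, a missing odd--odd edge is exactly a pair of odd primes $d<r<s$ with $k<r+s\leqs n$, i.e. two primes above $d$ whose sum lands in the length-$d$ window $(k,n]$. For the extreme case $d=2$ this is precisely the assertion that $n$ (if even) or $n-1$ (if odd) is a sum of two odd primes, i.e. an instance of Goldbach's conjecture; for larger $d$ the window is wider while $d\leqs n-P$ remains smaller than any prime gap below $n$, so a balanced Goldbach partition again supplies a pair with both parts exceeding $d$. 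Thus in every case $k\neq n-1$ one exhibits at least one missing edge and concludes $\Gamma(G)\neq\Gamma(H)$.

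For the remaining case $k=n-1$ (so $d=1$ and $H=A_{n-1}$) the only candidate missing edges are odd--odd pairs with $r+s=n$ and, from the prime $2$, the pair $\{2,n-4\}$ when $n-4$ is prime. If $n$ is odd then no two odd primes sum to the odd number $n$, so the first family is empty and $\Gamma(G)=\Gamma(H)$ reduces to $n-4$ being composite; since the relevant odd composite $n$ all satisfy $n\geqs 15$, this is exactly the unconditional content of Lemma \ref{l:gc2}, giving the ``if'' direction of the conjecture. If $n$ is even, a Goldbach partition $n=r+s$ into two odd primes yields a missing edge and forces $\Gamma(G)\neq\Gamma(H)$, which excludes even $n$.

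The main obstacle is plainly arithmetic: every step asserting the \emph{existence} of a missing edge when $k\neq n-1$, and the step ruling out even $n$ when $k=n-1$, amounts to writing a given even integer in $(k,n]$ as a sum of two odd primes subject to a mild lower bound on the summands. For $n\geqs 12$ these are instances of Goldbach's conjecture, verified far beyond any range we require but not proved in general, which is precisely why the statement is posed as a conjecture rather than a theorem. The genuinely unconditional part is the odd-$n$, $k=n-1$ case of Lemma \ref{l:gc2} together with the purely combinatorial reduction above, which confines every appeal to number theory to explicit Goldbach-type statements.
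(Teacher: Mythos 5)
Since the statement under review is a conjecture, neither you nor the paper proves it unconditionally, and you correctly recognise this; your proposal is a reduction to Goldbach-type statements, which is exactly the paper's strategy. The paper formulates an auxiliary conjecture (for even $n \geqs 12$ there exist distinct primes $r<s<p$ with $n=r+s$, where $p$ is the largest prime below $n$), deduces from it that $\Gamma(G) \neq \Gamma(H)$ whenever $n$ is even, or $n$ is odd and $k<n-1$, and settles the remaining case $k=n-1$ with $n$ odd unconditionally in Lemma \ref{l:gc2}, whose proof invokes Zvezdina's lemma \cite{Zvez}. Your route is the same in substance but more systematic: you derive complete adjacency criteria ($r \sim_G s$ iff $r+s \leqs n$ for odd primes $r<s$; $r \sim_H s$ iff $r+s \leqs k$ or $r \leqs d$, where $d=n-k$; $2 \sim_G s$ iff $s \leqs n-4$), which turns the whole problem into window arithmetic and in particular yields an elementary, self-contained proof of the $k=n-1$ case, bypassing Zvezdina. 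What this buys is a precise statement of the arithmetic input: you need partitions of an even number in $(k,n]$ into two distinct odd primes \emph{both exceeding $d$}, which is mildly stronger than plain Goldbach and essentially the paper's variant conjecture (note the paper's own deduction for $k<n-1$ also tacitly needs $r>d$, which its variant does not literally guarantee, so your explicit lower bound on the summands is if anything more careful).

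One concrete slip: your claim that for $d \in \{2,3\}$ the prime $2$ can produce the missing edge $\{2,\,n-4\}$ is false when $d=2$. With $k=n-2$ one can place an $s$-cycle and one transposition in the $k$-block and the second transposition on the $2$-point block, giving an even element of order $2s$ provided $s+2 \leqs n-2$; hence $2 \sim_H s$ iff $s \leqs n-4$, exactly the threshold in $G$, so no $2$-edge is ever missing for $d=2$. The $\{2,\,n-4\}$ phenomenon occurs only for $d \in \{1,3\}$ (for $d=3$ the sharpest construction needs $s+2 \leqs n-3$, i.e.\ $s \leqs n-5$). This error is harmless to your logic, since for $d=2$ the missing edge you actually exhibit comes from the odd--odd window $(n-2,n]$, and $d=2$ never arises in the ``if'' direction; but the adjacency table should be corrected before any of this is written up.
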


For example, this conjecture implies that $\Gamma(G) = \Gamma(H)$ if $k=n-1$ and 
$$n \in \{25,39,49,55,69,81,85,91,95,99, \ldots\}.$$ 
In particular, $\Gamma(G) = \Gamma(H)$ if $n=m^2$ and $m \geqs 5$ is odd.

The following result shows that determining whether or not $\Gamma(G) = \Gamma(H)$ in the special case $n=p+1$ is equivalent to a formidable open problem in number theory.

\begin{lem}\label{l:gc1}
Let $G = A_{p+1}$ and $H = A_{p}$, where $p \geqs 7$ is a prime. Then $\Gamma(G) \neq \Gamma(H)$ if and only if there exist distinct primes $r,s$ such that $p+1 = r+s$.
\end{lem}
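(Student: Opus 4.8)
The plan is to exploit the containment $H = A_{p} \leqs A_{p+1} = G$, which forces $\Gamma(H)$ to be a subgraph of $\Gamma(G)$ on a common vertex set, and then to pin down exactly which edges are gained on passing from $A_p$ to $A_{p+1}$. First I would record that $\pi(G) = \pi(H)$: since $p \geqs 7$ is an odd prime, $p+1$ is composite, so no new prime divides $|A_{p+1}|$, and both sets equal $\{q : q \text{ prime},\ q \leqs p\}$. As every element of $A_p$ lies in $A_{p+1}$, each edge of $\Gamma(H)$ is an edge of $\Gamma(G)$; hence $\Gamma(G) \neq \Gamma(H)$ if and only if $\Gamma(G)$ possesses an edge $\{r,s\}$ that $\Gamma(H)$ lacks.

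The core step is a clean description of adjacency in $\Gamma(A_n)$ for distinct primes $r,s \leqs n$. By Lemma \ref{l:1}, $r \sim_{A_n} s$ if and only if $A_n$ contains an element of order $rs$, so I must find the least $n$ supporting such an element. An element of order $rs$ requires a cycle whose length is divisible by $r$ and one divisible by $s$, and using single $r$- and $s$-cycles minimises the support. When $r$ and $s$ are both odd, an $r$-cycle together with an $s$-cycle is an even permutation of order $rs$ on $r+s$ points, and no permutation of order $rs$ can use fewer points; thus $r \sim_{A_n} s$ exactly when $r+s \leqs n$. When one prime is $2$, say $s$ odd, the product of an $s$-cycle and a single transposition is odd, so parity must be repaired by a second transposition: the minimal even permutation of order $2s$ lives on $s+4$ points, giving $2 \sim_{A_n} s$ exactly when $s+4 \leqs n$. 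Verifying these minimality claims — in particular the parity bookkeeping (an even number of even-length cycles is needed) that raises the bound from $s+2$ to $s+4$ in the even case — is the main technical point, though it is elementary.

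With these criteria in hand the comparison is a short parity computation. An edge $\{r,s\}$ of $\Gamma(G)$ is absent from $\Gamma(H)$ precisely when its minimal support equals $p+1$. For the pair $\{2,s\}$ this requires $s = p-3$, which is even (as $p$ is odd) and at least $4$, hence never prime; so no edge involving $2$ is gained. For distinct odd primes $r,s$ the condition is $p < r+s \leqs p+1$, and since $r+s$ is even this forces $r+s = p+1$. Therefore $\Gamma(G) \neq \Gamma(H)$ if and only if there exist distinct odd primes $r,s$ with $r+s = p+1$. Finally I would observe that any expression of the even number $p+1 \geqs 8$ as a sum of two primes automatically uses two odd primes, since the alternative $2 + (p-1)$ fails ($p-1$ being even and at least $6$); this shows the condition is equivalent to the existence of distinct primes $r,s$ with $p+1 = r+s$, as claimed, and exposes the link to Goldbach's conjecture.
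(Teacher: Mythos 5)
Your proposal is correct, but it takes a different route from the paper on the key (converse) direction. The paper's proof runs through Lemma \ref{l:1}: given a witness pair with $r \sim_G s$ and $r \not\sim_H s$, it picks $x \in G$ of order $r$ with $s$ dividing $|C_G(x)|$, reads off the cycle-shape $(r^k,1^{p+1-rk})$ to bound $s \leqs p+1-rk$, and then compares against the centralisers of $y \in H$ of shape $(2^2,1^{p-4})$ (if $r=2$) or $(r,1^{p-r})$ (if $r$ odd) to force $k=1$ and $p+1=r+s$. You instead prove a complete adjacency criterion for $\Gamma(A_n)$: for distinct odd primes, $r \sim_{A_n} s$ if and only if $r+s \leqs n$, and $2 \sim_{A_n} s$ if and only if $s+4 \leqs n$, the shift from $s+2$ to $s+4$ coming from the parity repair (an even number of even-length cycles). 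This is precisely the criterion $e(r)+e(s) \leqs n$ with $e(p)=p^{1+\delta_{2,p}}$, i.e.\ you are reproving \cite[Lemma 1$'$]{Zvez} — which, interestingly, is exactly the tool the paper does invoke in its proof of Lemma \ref{l:gc2}, so your argument unifies the treatment of Lemmas \ref{l:gc1} and \ref{l:gc2} where the paper uses two different methods. Your explicit observation that $\Gamma(H)$ is a subgraph of $\Gamma(G)$ on the common vertex set (so only gained edges need be detected) makes the logic cleaner than the paper's implicit use of the same fact. Two points to attend to in a full write-up: the minimality analysis for order $2s$ must also dispose of shapes containing a $2s$-cycle (an odd permutation, so its parity repair costs support $2s+2 \geqs s+4$), which you gesture at but should spell out; and note that your parity remark "$r+s$ is even forces $r+s=p+1$" is redundant, since $p < r+s \leqs p+1$ already forces equality. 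As a side benefit, your self-contained criterion avoids the small imprecision in the paper's centraliser formula (which omits the $k!$ factor from $|C_G(x)|$, harmless there because a witness pair cannot have $s \leqs k$, but requiring a word of justification).
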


\begin{proof}
First observe that if $p=5$ then $\Gamma(G)$ is the empty graph on $3$ vertices, so $\Gamma(G) = \Gamma(H)$. Now assume $p \geqs 7$. Suppose there exist distinct primes $r$ and $s$ such that $p+1 = r+s$ (for example, this holds if \emph{Goldbach's Conjecture} is true, with distinct primes). Then $r,s \in \pi(G)$, and clearly $r \sim_G s$ but $r \not\sim_H s$, so 
$\Gamma(G) \neq \Gamma(H)$. 

For the converse, suppose that $\Gamma(G) \neq \Gamma(H)$; say $r,s \in \pi(G)$, where $r<s$, $r \sim_G s$ and $r \not\sim_H s$. By Lemma \ref{l:1}, there exists an element $x \in G$ of order $r$ such that $s$ divides $|C_G(x)|$. Now $x$ has cycle-shape $(r^k,1^{p+1-rk})$ for some $k \geqs 1+\delta_{r,2}$, so $|C_G(x)| = \frac{1}{2}(p+1-rk)!r^k$ and thus $s \leqs p+1-rk$. If $r=2$ and $y \in H$ has cycle-shape $(2^2,1^{p-4})$ then the condition $r \not\sim_H s$ implies that $|C_H(y)| = 2(p-4)!$ is indivisible by $s$, so $s \geqs p-3$ and we deduce that $s=p-3$ is the only possibility. But this situation cannot arise since $p \geqs 7$. Now assume $r>2$. If $y \in H$ has cycle-shape $(r,1^{p-r})$ then $|C_H(y)| = \frac{1}{2}(p-r)!r$ is indivisible by $s$, so $s \geqs p-r+1$. Therefore, $k=1$ is the only possibility, and $p+1=r+s$. The result follows.
\end{proof}

More generally, suppose that the following variation of Goldbach's conjecture is true (note that the condition $n \geqs 12$ is needed, since the conclusion is false when $n=10$):

%
\begin{con}
Let $n \geqs 12$ be an even integer, and let $p$ be the largest prime less than $n$. Then there exist distinct primes $r,s$ such that $r<s<p$ and $n=r+s$.
\end{con}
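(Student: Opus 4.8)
The plan is to reduce the conjecture to a purely quantitative statement about the number of Goldbach representations of $n$, and then to explain why this quantitative form lies just beyond the reach of current techniques. Write $g(n)$ for the number of unordered pairs $\{a,b\}$ of primes with $a+b=n$ (allowing $a=b$). First I would observe that a representation $\{a,b\}$ with $a \leqs b$ fails the requirements of the conjecture precisely when $a=b$ or $b=p$: indeed, since $b$ is a prime with $b<n$, we automatically have $b \leqs p$, so the condition $b<p$ fails only when $b=p$; and if $b<p$ then $a \leqs b<p$, so both primes are automatically less than $p$. The diagonal pair $a=b$ occurs only if $n/2$ is prime, while the pair with $b=p$ occurs only if $n-p$ is prime; in particular, at most two representations are excluded (and these two cannot coincide, by Bertrand's Postulate applied to $n=2p$).

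Consequently, the conjecture follows as soon as
$$g(n) \geqs 1 + \epsilon_1(n) + \epsilon_2(n),$$
where $\epsilon_1(n)=1$ if $n/2$ is prime and $0$ otherwise, and $\epsilon_2(n)=1$ if $n-p$ is prime and $0$ otherwise. In particular, the uniform bound $g(n) \geqs 3$ suffices for every even $n \geqs 12$. It is worth noting that this reduction is sharp at $n=10$, where $p=7$ and the only representations are $\{3,7\}$ and $\{5,5\}$, both of which are excluded; this accounts for the hypothesis $n \geqs 12$ in the statement.

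Next I would try to establish $g(n) \geqs 3$ in two ranges. For $n$ below a large explicit bound, the inequality can be verified directly by machine, drawing on the extensive computational verifications of Goldbach's conjecture. For large $n$, the Hardy--Littlewood conjecture predicts
$$g(n) \sim 2 C_2 \prod_{\substack{\ell \mid n \\ \ell>2}} \frac{\ell-1}{\ell-2} \cdot \frac{n}{(\log n)^2},$$
where $C_2$ is the twin-prime constant; since the right-hand side tends to infinity, this would give $g(n) \geqs 3$ for all sufficiently large $n$, and combining the two ranges would complete the argument.

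The main obstacle is that the required lower bound on $g(n)$ for an \emph{individual} even integer $n$ is not available unconditionally. The circle method yields the expected asymptotic only on average, as in the work of Chudakov and Estermann on almost all even integers, or under hypotheses such as the Generalised Riemann Hypothesis; an unconditional bound $g(n) \geqs 1$ for every even $n$ is already equivalent to Goldbach's conjecture itself. Thus, although the reduction above exhibits the statement as a mild strengthening of a quantitative form of Goldbach, a complete proof valid for all $n \geqs 12$ is beyond the current state of the art, which is precisely why we record it here as a conjecture.
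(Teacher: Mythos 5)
The statement you were asked to prove is recorded in the paper as a conjecture --- a variant of Goldbach's conjecture --- and the paper offers no proof of it at all; it only remarks that the hypothesis $n \geqs 12$ is necessary because the conclusion fails at $n=10$, and then uses the conjecture conditionally in its group-theoretic arguments. Your assessment is therefore exactly right, and your refusal to manufacture a proof is the correct outcome. Moreover, your reduction is sound and goes slightly beyond what the paper writes down: a representation $n=a+b$ with primes $a \leqs b$ automatically has $b \leqs p$, so it is inadmissible precisely when $a=b$ or $b=p$; these two bad representations cannot coincide since Bertrand's Postulate gives $p>n/2$; hence $g(n) \geqs 1+\epsilon_1(n)+\epsilon_2(n)$ suffices, and your check that this reduction is sharp at $n=10$ (where $\{3,7\}$ and $\{5,5\}$ are both excluded) recovers precisely the paper's stated reason for requiring $n \geqs 12$. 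One immaterial quibble: for \emph{unordered} representations the Hardy--Littlewood prediction carries half the constant you wrote, but since you invoke the asymptotic only to note that it tends to infinity, nothing is affected. Since even $g(n) \geqs 1$ for all even $n \geqs 4$ is Goldbach's conjecture itself, your conclusion that the statement must remain conjectural is in full agreement with the paper.
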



If we assume the validity of this conjecture, then we immediately deduce that $\Gamma(G) \neq \Gamma(H)$ if $n \geqs 12$ is even; simply take $r$ and $s$ as in the conjecture, and note that $r \sim_G s$ and $r \not\sim_H s$. Similarly, if $n \geqs 15$ is odd and $k<n-1$ then the conjecture provides primes $r$ and $s$ such that $n-1=r+s$, and again it is easy to see that $r \sim_G s$ and $r \not\sim_H s$. 

\begin{lem}\label{l:gc2}
Let $G = A_n$ and $H=A_{n-1}$, where $n \geqs 15$ is odd. Then $\Gamma(G) = \Gamma(H)$ if and only if $n-4$ is composite.
\end{lem}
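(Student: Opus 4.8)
The plan is to reduce everything to an explicit combinatorial criterion for adjacency in the prime graph of an alternating group, phrased in terms of cycle-shapes, and then to compare $\Gamma(A_n)$ with $\Gamma(A_{n-1})$ edge by edge. As a preliminary, I note that we are in the case $k=n-1$ of Theorem \ref{t:main}(b), so $n$ is composite and hence $\pi(G)=\pi(H)$; moreover $A_{n-1}\leq A_n$, so every element of $H$ lies in $G$ and therefore every edge of $\Gamma(H)$ is already an edge of $\Gamma(G)$. Thus $\Gamma(G)=\Gamma(H)$ holds precisely when $\Gamma(G)$ contains no edge absent from $\Gamma(H)$, and the whole problem is to detect such ``new'' edges.

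Next I would establish the adjacency criteria. Using Lemma \ref{l:1} and the fact that an element of $A_m$ has order divisible by a prime $p$ if and only if its disjoint cycle decomposition contains a cycle of length divisible by $p$, I would prove two statements. First, for distinct \emph{odd} primes $r,s$: the group $A_m$ contains an element of order $rs$ if and only if $r+s\leq m$. The sufficiency is witnessed by an $r$-cycle together with a disjoint $s$-cycle (both even permutations, since the cycles have odd length); for necessity, the required cycles use at least $r+s$ points when disjoint and at least $rs\geq r+s$ points when merged, the inequality $rs\geq r+s$ following from $(r-1)(s-1)\geq 1$. Second, for the prime $2$ and an odd prime $s$: the group $A_m$ contains an element of order $2s$ if and only if $s+4\leq m$. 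Here the parity constraint is the crucial point: a permutation is even if and only if it has an even number of even-length cycles, so an element of even order lying in $A_m$ must contain at least two even-length cycles, using at least $4$ points; together with a cycle of length divisible by $s$ this forces $m\geq s+4$ (the case distinction on whether the $s$-divisible cycle is one of the even cycles uses $2s+2\geq s+4$), and conversely two transpositions together with a disjoint $s$-cycle realise order $2s$.

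Finally I would carry out the comparison. For distinct odd primes $r,s$, the edge $r\sim s$ lies in $\Gamma(A_n)$ but not $\Gamma(A_{n-1})$ exactly when $r+s=n$; since $r+s$ is even and $n$ is odd this never occurs, so no new odd--odd edge can appear. For a pair $\{2,s\}$ with $s$ odd, the edge is present in $\Gamma(A_n)$ but not $\Gamma(A_{n-1})$ exactly when $s=n-4$, which is possible if and only if $n-4$ is prime (note $n-4\geq 11$ since $n\geq 15$, and that $2$ and $n-4$ both lie in $\pi(G)$). Hence the unique candidate for a new edge is $\{2,\,n-4\}$, and it is genuinely new precisely when $n-4$ is prime. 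Therefore $\Gamma(G)\neq\Gamma(H)$ if and only if $n-4$ is prime, which is equivalent to the claim. I expect the main obstacle to be the careful parity bookkeeping in the $r=2$ case: it is exactly this that makes the threshold $s+4$ rather than $s+2$, and thereby singles out $n-4$ (rather than $n-2$), so this step must be stated precisely to get the correct criterion.
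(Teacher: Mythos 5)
Your proposal is correct, and its skeleton is the same as the paper's: both arguments hinge on the threshold function that assigns $p$ to an odd prime and $4$ to the prime $2$ (the minimal number of points needed to realise that prime in an \emph{even} permutation), together with the parity punchline that an odd--odd sum $r+s$ can never equal the odd number $n$, so the only possible edge of $\Gamma(A_n)$ missing from $\Gamma(A_{n-1})$ is $\{2,\,n-4\}$ with $n-4$ prime. The difference is one of packaging. The paper proves the forward direction by an explicit centraliser computation (if $r=n-4$ is prime and $y\in A_{n-1}$ has order $r$, then $y$ has cycle-shape $(r,1^3)$ and $|C_H(y)|=3r$ is odd, so $2\not\sim_H r$), and for the converse it cites Zvezdina \cite[Lemma 1$'$]{Zvez} to get $n-1<e(r)+e(s)\leq n$, hence $n=e(r)+e(s)$, forcing $r=2$ and $s=n-4$ prime. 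You instead prove the Zvezdina-type adjacency criterion from scratch --- $A_m$ has an element of order $rs$ iff $r+s\leq m$ (odd $r,s$), resp.\ $s+4\leq m$ (pair $\{2,s\}$) --- and then read off both directions uniformly from the exact-threshold condition $e(r)+e(s)=n$. Your parity bookkeeping in the $r=2$ case is exactly right (at least two even-length cycles, and $2s+2\geq s+4$ when the $s$-divisible cycle is itself even), and it is indeed the step that produces $n-4$ rather than $n-2$. What your route buys is self-containment and a cleaner symmetric treatment of the two directions; what the paper's route buys is brevity, outsourcing the combinatorial lemma. One point you handled with appropriate care, which the paper leaves implicit: the reduction ``$\Gamma(G)\neq\Gamma(H)$ iff some edge of $\Gamma(G)$ is absent from $\Gamma(H)$'' needs $\pi(G)=\pi(H)$, i.e.\ $n$ composite; this holds by the standing hypothesis of Section \ref{s:alt} (the condition in Theorem \ref{t:main}(b) with $k=n-1$), and you correctly flagged it rather than letting it pass silently.
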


\begin{proof}
First assume that $r=n-4$ is a prime number and set $s=2$, so $r \sim_G s$. Now, if $y \in H$ has order $r$ then $y$ has cycle-shape $(r,1^{3})$ and thus $|C_H(y)| = 3r$ is odd. Therefore, $r \not\sim_H s$ and thus $\Gamma(G) \neq \Gamma(H)$. For the converse, we argue as in the proof of \cite[Proposition 1]{Zvez}. Suppose that  $\Gamma(G) \neq \Gamma(H)$, say $r,s \in \pi(G)$ where $r<s$, $r \sim_G s$ and $r \not\sim_H s$. For a prime number $p$, set $e(p)=p^{1+\delta_{2,p}}$. By \cite[Lemma 1$'$]{Zvez}, $n-1<e(r)+e(s) \leqs n$, so $n=e(r)+e(s)$. Since $n$ is odd, it follows that $r=2$ and thus $s=n-4$ is a prime number. 
\end{proof}

In particular, Lemma \ref{l:gc2} implies that $\Gamma(G) = \Gamma(H)$ if the conditions in part (b) of Conjecture \ref{c:1} hold.

\section{Proof of Corollary \ref{cor:1}}\label{s:cor}

In this final section we establish Corollary \ref{cor:1}. Let $G$ be a finite simple group and let $H$ be a proper subgroup of $G$. Suppose $\Gamma(G) = \Gamma(H)$. We may as well assume that $H$ is non-maximal, so $H<M<G$ for some maximal subgroup $M$ of $G$. Note that $\Gamma(G)=\Gamma(M)$, so the possibilities for $(G,M)$ are given in Theorem \ref{t:main}. 

First assume that $(G,M)$ is not one of the cases in the first four rows of Table \ref{tab:main}. Here it is easy to determine the proper subgroups $H$ of $M$ such that $\Gamma(M) = \Gamma(H)$, using {\sc Magma} \cite{magma} for example. Only one case arises, namely 
$(G,M,H) = ({\rm U}_{4}(2), {\rm Sp}_{4}(2), O_{4}^{-}(2))$. This gives us the second example recorded in part (b) of Corollary \ref{cor:1}. 

To complete the proof of the corollary, we may assume that $(G,M)$ is one of the first four cases listed in Table \ref{tab:main}. Let $L$ be a maximal subgroup of $M$ containing $H$. Suppose $(G,M) = ({\rm P\Omega}_{8}^{+}(q),\Omega_7(q))$, where $q$ is odd. Here $M$ is simple and thus Theorem \ref{t:main} implies that $\Gamma(M) \neq \Gamma(L)$, which eliminates this case. Similarly, if $(G,M) = (\Omega_{8}^{+}(q),{\rm Sp}_{6}(q))$ (with $q$ even) then Theorem \ref{t:main} implies that the only possibility is $L=O_{6}^{+}(2)$ with $q=2$. By our earlier analysis, we know that there is no proper subgroup $J<L$ such that $\Gamma(L) = \Gamma(J)$, whence $H = O_{6}^{+}(2)$. This yields the first case recorded in part (b) of Corollary \ref{cor:1}. 

Finally, let us assume that $(G,M) = ({\rm Sp}_{2m}(q), O_{2m}^{-}(q))$, where $m \in \{2,4\}$ and $q$ is even. Let $T = \Omega_{2m}^{-}(q)$ be the socle of $M$ and note that $\pi(T) = \pi(M)$. We claim that $\Gamma(G) \neq \Gamma(T)$. This can be checked directly if $(m,q) = (4,2)$, so let us assume that $(m,q) \neq (4,2)$. Let $r=2$ and let $s$ be a primitive prime divisor of $q^{2m-2}-1$. If $x \in G$ is a transvection (that is, a $b_1$-involution in the terminology of \cite{AS}) then $|C_G(x)|$ is divisible by $|{\rm Sp}_{2m-2}(q)|$, so $s$ divides $|C_G(x)|$ and thus $r \sim_G s$. Now  $T = \Omega_{2m}^{-}(q)$ does not contain any $b$-type involutions (see \cite[8.10]{AS}). In particular, if $y \in T$ is an involution then either $m=2$ and $|C_T(y)| = q^2$, or $m \geqs 4$ and any odd prime divisor of $|C_T(y)|$ must divide $|{\rm Sp}_{2m-4}(q)|$ (see Lemma \ref{inv1}). Therefore, $s$ does not divide $|C_T(y)|$, so $r \not\sim_T s$. This justifies the claim. 

In view of the claim, we may assume that $H$ does not contain $T$. We are now in a position to apply \cite[Corollary 5]{LPS}. However, $T = \Omega_{2m}^{-}(q)$ is not one of the cases listed in the first column of \cite[Table 10.7]{LPS}. This rules out the case $(G,M) = ({\rm Sp}_{2m}(q), O_{2m}^{-}(q))$, and the proof of Corollary \ref{cor:1} is complete.

\end{document}